\documentclass[9pt]{article}
\usepackage[dvips,final]{graphicx}
\usepackage{amsthm}
\usepackage{amssymb}
\usepackage{amsfonts}
\usepackage[displaymath,mathlines]{lineno}
\usepackage{newlfont}
\usepackage[centertags]{amsmath}
\usepackage[margin=1in]{geometry}
\newcommand{\Rd}{\mathbb{R}^d}
\newcommand{\norm}[1]{\left\Vert#1\right\Vert}

\newcommand{\abs}[1]{\left\vert#1\right\vert}
\newcommand{\seq}[1]{\left<#1\right>}
\newcommand{\set}[1]{\left\{#1\right\}}
\newcommand{\Real}{\mathbb R}

\addtolength{\hoffset}{0pt}                        
\addtolength{\voffset}{0pt}                        
\theoremstyle{plain}
\newtheorem{thm}{Theorem}[section]

\newtheorem{lem}[thm]{Lemma}
\newtheorem{prop}[thm]{Proposition}
\theoremstyle{definition}
\newtheorem{defn}{Definition}[section]
%
%

\newtheorem{rem}{Remark}[section]
%

\title{The catalytic Ornstein-Uhlenbeck Process with Superprocess Catalyst}%

\author{Juan-Manuel Perez-Abarca\\  (jperez@unpa.edu.mx)\\Universidad del Papaloapan ( Mexico )\\Donald A. Dawson  (don.dawson@rogers.com)\\ Carleton University (Canada )}%

\listfiles%
\begin{document}
%
\maketitle

 \begin{abstract}
The main objective of this work is to study a natural class of catalytic Ornstein-Uhlenbeck (O-U)
processes with a measure-valued random catalyst, for example, super-Brownian motion. We relate this
to the class of affine processes that provides  a unified setting in which to view
Ornstein-Uhlenbeck processes, superprocesses, and  Ornstein-Uhlenbeck processes with superprocess
catalyst.  We then review some basic properties of super-Brownian motion which we need and
introduce the Ornstein-Uhlenbeck process with catalyst given by a superprocess. The main results
are the affine characterization of the characteristic functional-Laplace transform of the joint
catalytic O-U process and catalyst process and the identification of basic properties of the
quenched and annealed versions of these processes.
\end{abstract}
\vspace{1cm}
 \textbf{Key concepts:} Stochastic partial differential equations,  superprocesses, measure-valued processes, affine processes, catalytic
Ornstein-Uhlenbeck processes, moment measures, characteristic Laplace functional, quenched and
annealed processes in a random medium. \vspace{1cm}

\def\baselinestretch{1.66}

\section{Introduction}

 Beginning with the  work of K. It\^o there has been extensive study of the class of infinite dimensional Ornstein-Uhlenbeck processes (e.g. \cite{Ito-1}).
 In the finite dimensional case O-U processes belong to the family of affine processes  (e.g. Duffie et al \cite {Dus:1}).  Other affine processes
such Cox-Ingersoll-Ross and Heston processes arise in financial modelling and a  characterization
of the general finite dimensional affine processes is known (cf.  \cite {K:R}).
  The main topic of this paper is the notion of catalytic
infinite dimensional O-U processes which give examples of infinite dimensional affine process. It
was established in \cite{PAD 12} that catalytic O-U processes can arise as fluctuation limits of
super-Brownian motion in a super-Brownian catalytic medium. These processes involve measure-valued
catalysts and the resulting catalytic O-U processes have as state spaces a class of Sobolev spaces.
The annealed versions also  give examples of infinite dimensional non-Gaussian random fields.

\section{The catalytic Ornstein-Uhlenbeck}

The name Ornstein-Uhlenbeck process, was  originally given to the process described by the
stochastic differential equation:

$$dX_t=\theta(a-X_t) dt + \sigma dB_t$$
where $\theta>0, a\in\mathbb{R},\sigma>0$ are parameters, $X_t\in\mathbb{\Real}$  and $B_t$ is
Brownian motion. The corresponding infinite dimensional analogue has developed into what is now
known as the \emph{generalized Ornstein-Uhlenbeck process} : \begin{equation*} dX_t=AX_t dt +
dW_t.\end{equation*} Here $X_t$ takes values in some Hilbert space $H$; $A\in\mathcal{L}(H)$ and
$\{W_t\}_{t\geq 0}$ is a Hilbert-space-valued Wiener process. One important case is the cylindrical
Wiener process whose distributional derivative:
$\frac{\partial^2W_t}{\partial t \partial x}$
is space-time white noise.
\\
\smallskip
\\

In this paper, we will consider the \emph{ Generalized Ornstein-Uhlenbeck (OU) process in catalytic
media}, that is, a process that satisfies a stochastic evolution equation of the form:
\begin{equation}\label{E1} dX(x,t)=AX(x,t)+dW_{\mu}(x,t) \end{equation} where $A$ and $X$ are as before, but
this time, $\mu=\{\mu_t\}_{t\geq 0}$ is a measure-valued function of time and $W_{\mu}$ is a Wiener
process based on  $\mu(t)$, i.e. $W$ defines a random set function such that for  sets
$A\in\mathcal{E} $:
\begin{itemize}
\item[(i)] $W_{\mu}(A\times[0,t])$ is a random variable with law $ \mathcal{N}(0,\int_0^t\mu_s(A))ds.$
\item[(ii)]  if $A\cap B =\phi$ then $W_\mu(A\times [0,t])$ and $W_\mu(B\times [0,t])$ are independent and
$W_\mu((A \cup B)\times [0,t])= W_\mu(A\times [0,t])+W_\mu(B\times [0,t])$.
\end{itemize}
As it will be seen later, $\mu_t$ will play the role of the \underline{catalyst}.   For the rest of
the discussion, we will assume that $\mu_t$ is a measure-valued Markov process, for example, the
super-Brownian motion (SBM).
\medskip

As a simple example, consider the  case of a randomly moving atom $\mu_t=\delta_{B_t}$ where $B_t$
is a Brownian motion in $\mathbb{R}^d$ starting at the origin.  In the case of a random catalyst
there are two processes to consider. The first is the solution of the perturbed heat equation
conditioned on a given realization of the catalyst process - this is called the \textit{quenched}
case.  The second is the process with probability law  obtained by averaging the laws of the
perturbed heat equation with respect to the law of the catalytic process - this is called the
\textit{annealed} case.

We will now determine the behavior of the annealed process and
show that it also depends  on the dimension as  expressed in the
following:

\medskip

\begin{thm}
 Let  $X(t,x)$ be the solution of (\ref{E1}) where $\mu_t=\delta_{B_t}$ with $B_t$  a Brownian motion in $\mathbb{R}^d$ and with $A=\frac{1}{2}\Delta$. Then
 $X$ is given by:
 \begin{equation}\label{1:pert}
X(t,x)=\int_0^t\;\;\int_{\Rd} p(t-s,x,y)W_{\delta_{B(s)}}(dy,ds)
\end{equation}
 then, the annealed variance of
  $X(t,x)$ is given by:
 \begin{gather*}
   E\left[\text{Var }X(t,x)\right] =     \begin{cases}
         1/4    & d=1,\quad x=0 \\
         <\infty &d=1,\quad x\ne 0\\
         \infty   & d\geq2
    \end{cases}
\end{gather*}
 \end{thm}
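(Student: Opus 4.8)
The plan is to compute the \emph{quenched} variance first (conditioning on a realization of the catalyst path $B$), then average over $B$ via Tonelli, and finally analyze the resulting one-dimensional $s$-integral dimension by dimension. First I would fix a Brownian path $B$ and read off from properties (i)--(ii) that, conditionally on $B$, the noise $W_{\delta_{B(\cdot)}}$ is an orthogonal martingale measure (in the sense of Walsh) whose intensity (quadratic-variation) measure is $\mu_s(dy)\,ds=\delta_{B_s}(dy)\,ds$. Since the kernel $p(t-s,x,y)$ in the mild solution (\ref{1:pert}) is deterministic, conditioning on $B$ makes $X(t,x)$ a centered Gaussian, and the It\^o isometry for such martingale measures gives the quenched variance
\[
\mathrm{Var}\big(X(t,x)\mid B\big)=\int_0^t\!\!\int_{\Rd} p(t-s,x,y)^2\,\delta_{B_s}(dy)\,ds=\int_0^t p(t-s,x,B_s)^2\,ds .
\]

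Because the integrand is nonnegative, I would apply Tonelli to interchange the expectation over $B$ with the time integral, reducing the annealed variance to $\int_0^t E\big[p(t-s,x,B_s)^2\big]\,ds$ with $B_s\sim\mathcal N(0,sI_d)$. Writing $p(t-s,x,y)=(2\pi(t-s))^{-d/2}\exp(-|x-y|^2/2(t-s))$ and using that $B_s$ has density $p(s,0,z)$, the inner expectation becomes the explicit Gaussian integral $\int_{\Rd}p(t-s,x,z)^2\,p(s,0,z)\,dz$. Completing the square in $z$ (the coefficient of $|z|^2$ in the exponent is $a=\tfrac1{t-s}+\tfrac1{2s}$) and using $(t-s)(t+s)=t^2-s^2$, I expect the closed form
\[
E\big[p(t-s,x,B_s)^2\big]=\frac{1}{(2\pi)^{d}\,(t^2-s^2)^{d/2}}\,\exp\!\Big(-\frac{|x|^2}{t+s}\Big),
\]
so that $E[\mathrm{Var}\,X(t,x)]=\int_0^t(2\pi)^{-d}(t^2-s^2)^{-d/2}e^{-|x|^2/(t+s)}\,ds$.

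It then remains to decide convergence of this integral. The only possible singularity is at $s=t$, where $t^2-s^2\sim 2t(t-s)$ and the exponential tends to the finite constant $e^{-|x|^2/2t}$, so the integrand behaves like $c\,(t-s)^{-d/2}$; this is integrable precisely when $d/2<1$, i.e. only for $d=1$, yielding $\infty$ for every $d\ge 2$. For $d=1$ and $x=0$ the exponential factor is $1$ and the integral is exact,
\[
\frac{1}{2\pi}\int_0^t\frac{ds}{\sqrt{t^2-s^2}}=\frac{1}{2\pi}\,\arcsin\!\big(s/t\big)\Big|_0^t=\frac{1}{2\pi}\cdot\frac{\pi}{2}=\frac14 .
\]
For $d=1$ and $x\ne 0$ the bounded factor $e^{-x^2/(t+s)}\le 1$ leaves the same integrable $(t-s)^{-1/2}$ singularity, so the integral is finite; this recovers all three cases of the statement.

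The step I expect to be the main obstacle is making the quenched It\^o isometry rigorous: one must specify the joint construction in which $W$ is conditionally Gaussian given $B$, define the stochastic integral of the deterministic kernel against the time-dependent atomic intensity $\delta_{B_s}(dy)\,ds$, and verify that the integral (\ref{1:pert}) is well defined path-by-path. Here it is worth noting that the quenched variance $\int_0^t p(t-s,x,B_s)^2\,ds$ is in fact finite \emph{almost surely} for every $d$, since near $s=t$ one has $|x-B_s|^2\to|x-B_t|^2>0$ a.s.\ and the factor $\exp(-|x-B_s|^2/(t-s))$ kills the $(t-s)^{-d}$ blow-up; the dimension-dependent divergence for $d\ge2$ arises only after integrating over the path. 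Once this foundational point and the isometry are in place, the Gaussian integral and the convergence analysis above are routine.
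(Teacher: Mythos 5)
Your proposal is correct and follows essentially the same route as the paper: both reduce the annealed variance to $\mathbb{E}\int_0^t p^2(t-s,x,B_s)\,ds$ via the quenched It\^o isometry, evaluate the Gaussian expectation to obtain an integrand proportional to $(t^2-s^2)^{-d/2}$, and then read off convergence at the endpoint $s=t$. The only difference is cosmetic --- you complete the square to get a single closed form $(2\pi)^{-d}(t^2-s^2)^{-d/2}e^{-|x|^2/(t+s)}$ valid for all $d$ and $x$, while the paper recognizes the expectation as a $\chi^2_d$ Laplace transform and treats the cases $x=0$, $x\ne 0$, and $d\ge 2$ separately.
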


 \begin{proof}
The second moments are computed as follows:
 \begin{equation*}
 \mathbb{E}X^2(t,x)=\mathbb{E}\int_0^t\frac{1}{2\pi(t-s)}
 \exp\left(-\frac{\|x-B(s)\|^2} {(t-s)}\right) ds.
 \end{equation*}
In the case $d=1$, $x=0$ the expectation in the last integral can be computed using the
 Laplace transform $M_X$ of the $\chi^2_1$ distribution as:
 $$ \int_0^t\frac{1}{2\pi(t-s)}\mathbb{E}\exp\left(-\frac{B^2(s)}{t-s}
 \right)ds$$
 with:
 \begin{equation*}
 \begin{split}
 \mathbb{E}\exp\left(-\frac{B^2(s)}{t-s}\right)&=\frac{1}{2\pi}\mathbb{E}
 \exp\left(-\frac{s}{t-s}\frac{B^2(s)}{s}\right)\\
 &=\frac{1}{2\pi}M_X\left(-\frac{s}{t-s}\right)=\frac{1}{2\pi}\left(\frac{t-s}{t+s}\right)^{1/2}
 \end{split}
 \end{equation*}
 A trigonometric substitution shows:
 $$\mathbb{E}X^2(t,0)=\frac{1}{2\pi}\int_0^t\frac{1}{(t^2-s^2)^{1/2}}ds
 = \frac{1}{4}.$$

 For $x\neq0$ and $d=1$, using a spatial shift we have
\begin{equation*}
 \begin{split}
 \mathbb{E}X^2(t,x)&=\mathbb{E}\int_0^t\frac{1}{2\pi(t-s)}
 \exp\left(-\frac{(B(s)-x)^2} {(t-s)}\right) ds\\
 &=\int_0^t\frac{1}{2\pi(t-s)(2\pi s)^{1/2}}\int
 e^{-\frac{(y-x)^2}{t-s}}e^{-\frac{y^2}{2s}}dyds\\
 &\leq \int_0^t\frac{1}{2\pi(t-s)(2\pi s)^{1/2}}\int
 e^{-\frac{(y-x)^2}{t-s}}dyds \\
 &\leq C \int_0^t\frac{1}{(s(t-s))^{1/2}}ds<\infty.
 \end{split}
 \end{equation*}

 \medskip

  When $d\geq2$ and the perturbation is $\delta_{B(t)}$ as before, then
 $\frac{\|B(s)^2\|}{s}$ is distributed as $\chi_d^2$
, and its Laplace transform is:
 $$ M_x(t)=\left[\frac{1}{1-2t}\right]^{d/2}$$
 So: $$\mathbb{E}X^2(t,0)=\int_0^t \frac{ds}{(t^2-s^2)^{d/2}}$$ Which is
 infinite when $d\geq 2$ at $x=0$. A modified calculation  show this is true everywhere.
 \end{proof}

 \begin{rem} Note that the annealed process is not Gaussian since a
similar calculation can show that $E(X^4(t,x))\neq3(E(X^2(t,x)))^2$.
\end{rem}

\section{Affine Processes and Semigroups }
In recent years affine processes have raised a lot of interest, due to their rich mathematical
structure, as well as to their wide range of applications in branching processes,
Ornstein-Uhlenbeck processes and mathematical finance.

\bigskip
In a general   \textit{affine processes} are the class of stochastic processes for which, the
logarithm of the characteristic function of its transition semigroup has the form
$\seq{x,\psi(t,u)}+\phi(t,u)$.
\medskip

Important finite dimensional examples of such processes are the
following SDE's:
\begin{itemize}
\item \textbf{Ornstein-Uhlenbeck process:} $z(t)$ satisfies the
Langevin type equation
$$dz(t)=(b-\beta z(t))dt+\sqrt{2}\sigma dB(t)$$
This is also known as a  Vasieck model for interest rates in
mathematical finance.
 \item \textbf{Continuous state branching
immigration process:} $y(t)\geq 0$ satisfies
$$dy(t)=(b-\beta z(t))dt+\sigma\sqrt{2y(t)}dB(t)$$
with  branching rate $\sigma^2$, linear decay rate $\beta$ and immigration rate $b$. This is also
called the Cox-Ingersoll-Ross (CIR) model in mathematical finance.
\item \textbf{The Heston Model (HM):}  Also of interest in mathematical finance, it assumes that $S_t$ the price of an asset is given by the stochastic differential equation:
$$dS_t=\mu S_t dt+ \sqrt{V_t} S_t dB_t^1 $$
where, in turn, $V_t$  the instantaneous volatility, is a CIR process determined by:
$$dV_t=\kappa(\theta-V_t)dt+\sigma\sqrt{V_t}(\rho dB_t^1+\sqrt{1-\rho^2 }dB_t^2) $$
and $ dB_t^1,  dB_t^2$ are Brownian motions  with correlation  $\rho$,  $\theta$ is the long-run
mean,  $\kappa$ the rate of reversion and $\sigma$ the variance.
\item \textbf{A continuous affine diffusion process in $\mathbb{R}^2$:}
$r(t)=a_1y(t)+a_2z(t)+b$ where
\begin{equation*}
\begin{split}
dy(t)&=(b_1-\beta_{11}y(t))dt\\
&+\sigma_{11}\sqrt{2y(t)}dB_1(t)+\sigma_{12}\sqrt{2y(t)}dB_2(t)\\
dz(t)&=(b_2-\beta_{21}y(t)-\beta_{22}z(t))dt\\
&+\sigma_{21}\sqrt{2y(t)}dB_1(t)+\sigma_{22}\sqrt{2y(t)}dB_2(t)
+\sqrt{2}a dB_0(t)
\end{split}
\end{equation*}
\end{itemize}
where $B_0,B_1,B_2$ are independent Brownian motions.

 It can be seen that the general affine
semigroup can be constructed as the convolution of a \textit{homogeneous} semigroup ( one in which
$\phi=0$) with a \textit{skew convolution semigroup} which corresponds to the constant term
$\phi(t,u)$.
\begin{defn}
A transition semigroup $(Q(t)_{t\geq0})$ with state space $D$ is called a \textit{homogeneous
affine semigroup (HA-semigroup)}  if for each $t\geq0$ there exists a continuous complex-valued
function $\psi(t,\cdot):=(\psi_1(t,\cdot),\psi_2(t,\cdot))$ on $U=\mathbb{C}_-\times(i\mathbb{R})$
with $\mathbb{C}_-=\{a+ib:a\in\mathbb{R}_-,b\in\mathbb{R}\}$ such that:
\begin{equation}\label{4:ha}
\int_D\exp\{\seq{u,\xi}\}Q(t,x,d\xi)=\exp\{\seq{x,\psi(t,u)}\},\qquad
x\in D,u\in U.
\end{equation}
 The HA-semigroup $(Q(t)_{t\geq0})$
given above is  \textit{regular}, that is, it  is stochastically continuous and the derivative
$\psi_t^\prime(0,u)$ exists for all $u\in U$ and is continuous at $u=0$ (see \cite{K:R}).
\end{defn}
\begin{defn}
A transition semigroup $(P(t))_{t\geq 0}$ on $D$ is called a \textit{(general) affine semigroup}
with the HA-semigroup $(Q(t))_{t\geq 0}$ if its characteristic function has the representation
\begin{equation}\label{4:ha2}\int_D\exp\set{\seq{u,\xi}}P(t,x,d\xi)=\exp\set{\seq{x,\psi(t,u)}+ \phi(t,u)},\quad x\in
D,u\in U\end{equation} where $\psi(t,\cdot)$ is given in the above definition and $\phi(t,\cdot)$
is a continuous function on $U$ satisfying $\phi(t,0)=0$.
\end{defn}
Further properties of the affine processes can be found in
(~\cite{Dus:1}, ~\cite{Affn:1} and ~\cite{K:R} ).  We will see that the class of
catalytic Ornstein-Uhlenbeck processes we introduce in the next section
forms a new class of infinite dimensional affine processes.

\section{A brief review on super-processes and their properties}
\smallskip

Given a measure $\mu$ on $\Rd$ and $f\in \mathcal{B}(\Rd)$, denote
by $\seq{\mu,f} :=\int_{\Rd}f\,\text{d}\mu$, let $M_F(\Rd)$ be the
set of finite measures on $\Rd$ and let $C_0(\Rd)_+$ denote the
continuous and positive functions, we also define:
\begin{align}
C_p(\Rd)&=\set{f\in C(\Rd):
\norm{f(x)\cdot\abs{x}^p}_\infty<\infty,p>0}\nonumber\\
M_p(\Rd)&=\set{\mu\in M(\Rd): (1+\abs{x}^p)^{-1}\;d\mu(x) \text{ is a
 finite measure}}\nonumber
\end{align}

\smallskip
\begin{defn}
The $(\alpha,d,\beta)$-superprocess $Z_t$ is the measure-valued
 process, whose Laplace functional is given by:
\begin{equation*}
\mathbb{E}_{\mu}[\text{exp}(-\seq{\psi,Z_t})]=\text{exp}[-\seq{\text{U}_t\psi,\mu}]
 \qquad\qquad\mu\in M_p(\Rd),\,\psi\in C_p(\Rd)_+
 \end{equation*}
where $\mu=Z_0,\text{ and }\text{U}_t$ is the nonlinear continuous
 semigroup given by the  the mild solution of the evolution equation:
\begin{equation}
\begin{split}
\dot{u}(t)&=\Delta_\alpha\,u(t)-u(t)^{1+\beta},\qquad0<\alpha\leq2,\quad
0<\beta\leq 1\\
u(0)&=\psi,\qquad\quad\psi\in D(\Delta_\alpha)_+.
\end{split}
\end{equation}
here $\Delta_\alpha$ is the generator of the $\alpha$-symmetric
stable process, given by:
\begin{eqnarray*}
&\Delta_{\alpha}u(x)=A(d,\alpha)\int_{\mathbb{R}^d}\frac{u(x+y)-u(x)}
{\abs{y}^{d+2\alpha}}dy\\
&A(d,\alpha)=\pi^{2\alpha-d/2}\Gamma((d-2\alpha)/2)/\Gamma(\alpha)
\end{eqnarray*}
  Then, $u(t)$ satisfies the
non-linear integral equation:
\begin{equation*}
u(t)=\text{U}_t\psi-\int_0^t\text{U}_{t-s}(u^{1+\beta}(s))ds.
\end{equation*}
\end{defn}
This class of measure-valued processes was first introduced in ~\cite{Ext:1}, an up-to-date
exposition of these processes is given in \cite{li}. It can be verified that the process with the
above Laplace functional is a finite measure-valued Markov process with sample paths in
$D(\Real_+,M_p(\Rd))$. The special case $\alpha=2,\;\beta=1$ is called super-Brownian motion (SBM)
and this has sample paths in $C(\mathbb{R}_+,M_p(\mathbb{R}^d))$

 Consider the following differential operator on
${M}_p(\mathbb{R}^d)$:
\begin{equation}\label{L:1}
LF(\mu)=\frac{1}{2}\int_{\mathbb{R}^d}\mu(dx)\frac{\delta^2F}{\delta\mu(x)^2}
+\int_{\mathbb{R}^d}\mu(dx)\Delta\left(\frac{\delta
F}{\delta\mu}\right)(x)
\end{equation}
Here the differentiation of $F$ is defined by
\begin{equation*}
\frac{\delta F}{\delta\mu(x)}=\lim\limits_{\epsilon\downarrow 0}
(F(\mu+\epsilon\delta_x)-F(\mu))/\epsilon
\end{equation*}
where $\delta_x$ denotes the Dirac measure at $x$. The domain
$\mathcal{D}(L)$ of $L$ will be chosen a class containing such
functions $F(\mu)=f(\seq{\mu,\phi_1},\dots,\seq{\mu,\phi_1})$ with
smooth functions $\phi_1,\dots,\phi_n$ defined on $\mathbb{R}^d$
having compact support and a bounded smooth function $f\text{ on
}\mathbb{R}^d$. As usual $\seq{\mu,\phi}:=\int\phi(x)\mu(dx)$.\\

The super-Brownian motion  is also characterized as the unique solution to the martingale problem
given by $(L,\mathcal{D}(L))$. The process is defined on the probability space
$(\Omega,\mathcal{F},P_\mu,\{Z_t\}_{t\geq0})$ and
$$ P_\mu(Z(0)=\mu)=1,\quad  P_\mu(Z\in C([0,\infty),M_p(\mathbb{R}^d)))=1.$$  \\
An important property of super-Brownian motion is the compact support property discovered by Iscoe
\cite{Isc:2}, that is, the closed support $S(Z_t)$ is compact if $S(Z_0)$ is compact.
\section{Catalytic OU with the ($\alpha,\text{d},\beta$)-superprocess as catalyst}
\subsection{Formulation of the process}

The main object of this section is the catalytic OU process given
by the solution of
\begin{equation}\label{SPCOU}
dX(t,x)=\frac{1}{2}\Delta X(t,x)dt+W_{Z_t}(dt,dx),\quad X_0(t,x)\equiv 0
\end{equation}
where  $Z_t$ is the $(\alpha,d,\beta)$-superprocess, in the
following discussion, we will assume that the catalyst and the white
noise are independent processes.

In order to study this process we first note that conditioned on the process $\{Z_t\}$, the process $X$ is Gaussian. We next determine the second
moment structure of this Gaussian process.

\begin{prop}
The variance of the process X(t) given by (\ref{SPCOU}), is:
$$\mathbb{E}X^2(t,x)=\int_0^t\!\!\!\int_{\Rd}p^2(t-s,x,u)Z_s(du)ds$$
and its covariance by:
$$\mathbb{E}X(t,x)X(t,y)=\int_0^t\!\!\!\int_{\Rd}p(t-s,x,u)p(t-s,y,u)
Z_s(du)ds$$
\end{prop}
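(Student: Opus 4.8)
The plan is to represent the solution of (\ref{SPCOU}) in its mild (variation-of-constants) form and then read off the second-order structure directly from the covariance of the noise $W_{Z}$ specified by properties (i)--(ii), working conditionally on the catalyst path $\{Z_s\}_{s\le t}$. Since the drift is the generator $\frac{1}{2}\Delta$, whose semigroup is convolution against the heat kernel $p(t,x,y)$, Duhamel's formula with the zero initial datum $X_0\equiv 0$ gives
\begin{equation*}
X(t,x)=\int_0^t\!\!\int_{\Rd} p(t-s,x,y)\,W_{Z_s}(dy,ds),
\end{equation*}
exactly as already recorded in (\ref{1:pert}) for the atomic catalyst. Because the stochastic integral of a kernel that is deterministic given $Z$ against the mean-zero noise has mean zero, the conditional variance coincides with the conditional second moment $\mathbb{E}X^2(t,x)$.

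The core computation is an It\^o-type isometry. First I would observe that properties (i)--(ii), together with the assumed independence of $W$ and $Z$, say precisely that, conditionally on $\{Z_s\}$, $W_{Z}$ is an orthogonal martingale measure whose covariance (Dol\'eans) measure is $Z_s(dy)\,ds$: for disjoint $A,B$ the increments are independent, and $\mathrm{Var}\,W_{Z}(A\times[0,t])=\int_0^t Z_s(A)\,ds$. Building the integral in the usual Walsh manner, first for simple integrands $h(s,y)=\sum_i c_i\,\mathbf{1}_{A_i\times(s_{i-1},s_i]}$, the isometry
\begin{equation*}
\mathbb{E}\!\left[\Big(\int_0^t\!\!\int_{\Rd} h\,dW_Z\Big)^2\,\Big|\,Z\right]=\int_0^t\!\!\int_{\Rd} h^2(s,y)\,Z_s(dy)\,ds
\end{equation*}
is immediate from (i)--(ii); extending by $L^2(Z_s(dy)\,ds)$-approximation then yields the same identity for the kernel $h(s,y)=p(t-s,x,y)$, which gives the stated variance $\mathbb{E}X^2(t,x)=\int_0^t\!\int_{\Rd}p^2(t-s,x,u)\,Z_s(du)\,ds$. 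The covariance follows by polarization: applying the associated bilinear form to $h_1(s,u)=p(t-s,x,u)$ and $h_2(s,u)=p(t-s,y,u)$ produces
\begin{equation*}
\mathbb{E}\big[X(t,x)X(t,y)\,\big|\,Z\big]=\int_0^t\!\!\int_{\Rd} p(t-s,x,u)\,p(t-s,y,u)\,Z_s(du)\,ds,
\end{equation*}
which is the asserted formula, the outer annealed moments being recovered afterwards by integrating over the law of $Z$, as in Section 2.

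The main obstacle will be the rigorous construction and justification of the stochastic integral and of the isometry, not the algebra, which is routine once the isometry is in hand. Two points need care. First, one must verify that the kernel is square-integrable in the appropriate sense, i.e. that $\int_0^t\!\int_{\Rd} p^2(t-s,x,u)\,Z_s(du)\,ds<\infty$, so that $X(t,x)$ is a genuine square-integrable random variable rather than merely a distribution-valued object; this is exactly where the dimension dependence enters, mirroring the annealed blow-up computed in Section 2. Second, since $Z$ is itself random, the isometry must be established conditionally on $Z$, and the predictability of the integrand with respect to the white-noise filtration must be checked. Here the assumed independence of $W$ and the catalyst $Z$ is essential: given $Z$, the kernel $p(t-s,x,\cdot)$ is a deterministic, hence predictable, integrand to which Walsh's stochastic-integral theory applies directly.
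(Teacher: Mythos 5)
Your proposal is correct and follows essentially the same route as the paper: both start from the stochastic convolution representation $X(t,x)=\int_0^t\int_{\Rd}p(t-s,x,u)\,W_{Z_s}(ds,du)$ and reduce the second moments to the covariance measure $\mathbb{E}[W_{Z_r}(dr,du)W_{Z_s}(ds,dw)]=\delta_s(r)\,dr\,\delta_u(w)\,Z_r(dw)$ coming from the independent increments of the noise, which is exactly the Walsh isometry you invoke. Your version merely makes the simple-function approximation and the conditioning on $Z$ more explicit than the paper does.
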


\begin{proof}
In order to compute the  covariance of X(t); recall that the solution
 of (\ref{SPCOU}) is given by the stochastic convolution:
$$X(t,x)=\int_0^t\!\!\!\int_{\Rd}p(t-s,x,u)W_{Z_s}(ds,du)$$
From which, the covariance is computed as:
$$\mathbb{E}X^2(t,x)=\int_0^t\!\!\!\int_{\Rd}\!\!\!\int_0^t\!\!\!\int_{\Rd}
 p(t-r,x,u)p(t-s,x,w)\mathbb{E}[W_{Z_r}(dr,du)W_{Z_s}(ds,dw)]$$
The \underline{covariance measure} is defined by:
\begin{eqnarray}
\textmd{Cov}_{W_{Z}}(dr,ds;du,dw)&\doteq&\mathbb{E}[W_{Z_r}(dr,du)W_{Z_s}(ds,
dw)]\nonumber\\
&=&\delta_s(r)\;dr\;\delta_u(w)\;Z_r(dw)\nonumber
\end{eqnarray}
The second equality is due to the fact that $W_{Z_r}$ is a white
noise perturbation and has the property of independent increments
in time and space.
\end{proof}

\medskip

\subsection{Affine structure of the catalytic OU-process}

We will compute now the characteristic functional of the annealed process $\mathbb{E}[\exp(i
\seq{\phi,X_t})]$, where by definition:
$$\seq{\phi,X_t}\doteq\int_{\Rd}X(t,x)\phi(x)\,\text{d}x,\qquad
\phi\in C(\Rd)$$ which is well-defined since $\text{X}_t$ is a
random field and also the characteristic functional-Laplace functional of the joint process $\{X(t),Z(t)\}$.  For this, we need the following definitions and
properties, for further details see \cite{Oct:1}.

\begin{defn}
Given the $(\alpha,d,\beta)$-superprocess $Z_t$, we define the
weighted occupation time
 process $Y_t$ by
 $$<\psi,Y_t>=\int_0^t<\psi,Z_s>ds\;\;\;\psi\in C_p(\Rd).$$
\end{defn}

\begin{rem}
The definition coincides with the intuitive
interpretation of $Y_t$ as the measure-valued process satisfying:
$$Y_t(B)=\int_0^tZ_s(B)ds,\;\;\;\;\textmd{for }B\in\mathcal{B}(\mathbb{R}^d)$$
\end{rem}
\begin{thm}\label{1:Iscoe}
It can be shown (\!\!~\cite{Oct:1}) that, given $\mu\in M_p(\Rd)$
and $\phi,\psi\in C_p(\Rd)_+$, and $p<d+\alpha$ then the joint
process $[Z_t,Y_t] $ has the following Laplace functional:
$$E_{\mu}[\exp(-<\psi,Z_t>-<\phi,Y_t>)]=\exp[-<U_t^{\phi}\psi,\mu>],\;\;
t\geq0,$$ where $U_t^\phi$ is the strongly continuous semigroup
associated with the evolution equation:
\begin{equation}
\begin{split}
\dot{u}(t)&=\Delta_\alpha u(t)-u(t)^{1+\beta}+\phi\\
u(0)&=\psi
\end{split}
\end{equation}
\end{thm}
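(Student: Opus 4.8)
The plan is to identify the joint Laplace functional through a single Dynkin-type martingale argument, from which both the exponential-affine dependence on $\mu$ and the governing evolution equation for the cumulant emerge simultaneously. First I would define $v_t := U_t^{\phi}\psi$ to be the mild solution of the stated inhomogeneous nonlinear equation $\dot v = \Delta_\alpha v - v^{1+\beta} + \phi$ with $v_0 = \psi$, and recall that the cumulant semigroup $U_t$ of the bare superprocess satisfies $E_\mu[\exp(-\seq{w, Z_t})] = \exp(-\seq{U_t w, \mu})$. Differentiating this identity at $t=0$ shows that the exponential functionals $G_w(\mu) = \exp(-\seq{w,\mu})$ lie in the domain of the superprocess generator $\mathcal L$, with $\mathcal L G_w(\mu) = G_w(\mu)\,\seq{-\Delta_\alpha w + w^{1+\beta}, \mu}$; this is the one computational fact I need.

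Next, fixing $t>0$, I would consider the process
\[
N_s = \exp\!\Big(-\seq{v_{t-s}, Z_s} - \int_0^s \seq{\phi, Z_r}\,dr\Big), \qquad 0 \le s \le t .
\]
Applying It\^o's formula (equivalently the martingale problem for $Z$) and collecting the three drift contributions --- the explicit $s$-dependence of $v_{t-s}$, the action of $\mathcal L$ on $G_{v_{t-s}}$, and the derivative of the running integral $\int_0^s\seq{\phi,Z_r}\,dr$ --- produces a finite-variation part equal to $N_s\,ds$ times
\[
\seq{\dot v_{t-s} - \Delta_\alpha v_{t-s} + v_{t-s}^{1+\beta} - \phi,\; Z_s}.
\]
By the defining equation for $v$ this pairing vanishes identically, so $N_s$ is a local martingale. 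Taking expectations in $E_\mu[N_t] = E_\mu[N_0]$ and using $Z_0 = \mu$, $v_0 = \psi$, together with $\seq{\phi, Y_t} = \int_0^t \seq{\phi, Z_r}\,dr$, yields $E_\mu[\exp(-\seq{\psi,Z_t} - \seq{\phi, Y_t})] = \exp(-\seq{v_t,\mu}) = \exp(-\seq{U_t^{\phi}\psi,\mu})$, which is the claim. The exponential-affine dependence on $\mu$ is, structurally, a reflection of the branching property of $Z$, consistent with this derivation.

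The main obstacle is not the formal calculation but its rigorous justification on the unbounded state space $M_p(\Rd)$. Three points require care. (i) One must establish well-posedness of the mild solution $v_t = U_t^{\phi}\psi$, showing it stays nonnegative and in $C_p(\Rd)$ and is regular enough --- e.g.\ via the integral equation $v_t = U_t\psi + \int_0^t U_{t-s}(\phi - v_s^{1+\beta})\,ds$ and a fixed-point or monotonicity argument --- for the generator computation to apply. (ii) Since $\psi,\phi \in C_p(\Rd)_+$ and $\mu \in M_p(\Rd)$ are genuinely unbounded, the pairings $\seq{\cdot, Z_s}$ and the martingale step must be controlled by moment estimates on $Z_s$; this is precisely where the hypothesis $p < d + \alpha$ enters, guaranteeing finiteness of the relevant weighted first moments $E_\mu\seq{w, Z_s}$ and hence integrability of $N_s$ and of its drift. (iii) The local martingale from It\^o must be upgraded to a true martingale using these uniform bounds before expectations are taken. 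An alternative to the continuous-time argument is a Trotter product / time-discretization scheme: approximating $\int_0^t\seq{\phi,Z_r}\,dr$ by Riemann sums and iterating the bare Laplace functional $U_\Delta$ backward over a partition produces the recursion $v^{(k+1)} = U_\Delta(v^{(k)} + \Delta\,\phi)$, whose limit as the mesh $\Delta \to 0$ solves the same equation; this route trades the generator-domain issues for a convergence analysis of the splitting scheme.
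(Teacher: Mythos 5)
The paper does not prove this statement itself: it is quoted from Iscoe \cite{Oct:1}, and the only related argument in the text is the proof of Theorem \ref{INHOMEQN}, which extends it to time-dependent $\Phi$ by a quite different route --- approximating $\Phi$ by step functions, conditioning successively on $\sigma(Z_s,\,s\leq s_{N-1})$ via the Markov property, and composing the constant-$\phi$ propagators over the partition; indeed that proof uses the present theorem as an input on each subinterval. Your primary argument is the standard martingale-problem/duality derivation, and its formal core is correct: the identity $\mathcal{L}G_w(\mu)=G_w(\mu)\seq{-\Delta_\alpha w+w^{1+\beta},\mu}$ is the right generator computation, the three drift contributions to $N_s$ cancel exactly when $\dot v=\Delta_\alpha v-v^{1+\beta}+\phi$, and $E_\mu[N_t]=E_\mu[N_0]$ with $v_0=\psi$, $Z_0=\mu$ gives the claim. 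Note that since $\psi,\phi\geq 0$ force $v_{t-s}\geq 0$ and hence $0<N_s\leq 1$, your local martingale is bounded and therefore automatically a true martingale, which disposes of your point (iii) immediately. The real work is in your points (i) and (ii): the functionals $G_{v_{t-s}}$ with $v_{t-s}\in C_p(\Rd)$ unbounded do not lie in the core $\mathcal{D}(L)$ of compactly supported cylinder functions in the paper's formulation of the martingale problem, so one must approximate by bounded compactly supported test functions and pass to the limit using the first-moment bound $E_\mu\seq{w,Z_s}=\seq{V_s^\alpha w,\mu}<\infty$; this is exactly where $p<d+\alpha$ enters, since it makes the weight $(1+|x|^p)^{-1}$ essentially invariant under the $\alpha$-stable semigroup. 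Your closing alternative (Riemann sums plus iteration of the bare log-Laplace semigroup) is essentially the technique the paper itself uses for Theorem \ref{INHOMEQN}. Both routes are viable: the martingale route is more self-contained and does not presuppose the constant-$\phi$ case, while the discretization route trades generator-domain issues for a convergence analysis of the splitting scheme.
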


\bigskip

A similar expression will be derived when $\phi$ is a function of time,
but before, we need the following:

\begin{defn}A deterministic non-autonomous Cauchy problem, is given by:
\[ (\text{NACP})\quad \quad \quad \left\lbrace
 \begin{array}{ll}\label{nacp}
\dot{u}(t)=A(t)u(t)+f(t)\quad\quad\quad 0\leq s\leq t\\
u(s)=x
           \end{array}
         \right. \]
where $A(t)$ is a linear operator which depends on t.
\end{defn}
\medskip
Similar to the autonomous case, the solution is given in terms of
a
 two parameter family of operators $U(t,s)$, which is called the propagator or the
evolution system of the problem (NACP), with the following properties:
\medskip
\begin{itemize}
\item $U(t,t)=I,\quad U(t,r)U(r,s)=U(t,s)$.
\item $(t,s)\rightarrow U(t,s)\text{ is strongly continuous for }0\leq s\
\leq t\leq T$.
\item The solution of (NACP) is given by:
\begin{equation}
u(t)=U(t,s)x+\int_s^tU(t,r)f(r)dr
\end{equation}
\item in the autonomous case the propagator is equivalent to the
semigroup $U(t,s)=T_{t-s}$.
\end{itemize}

\begin{thm}\label{INHOMEQN}
Let $\mu\in M_p(\Rd),\text{ and } \Phi:\mathbb{R}_+^1\rightarrow
\text{C}_p(\Rd)_+$ be right continuous and piecewise continuous such
 that for each $t>0$ there is a $k>0$ such that
  $\Phi(s)\leq k\cdot(1+\abs{x}^p)^{-1} $ for all $s\in[0,t]$.
   Then
$$\mathbb{E}_\mu\left[\exp\left(-\seq{\Psi,Z_t}-\int_0^t
\seq{\Phi(s),Z_s}\,ds\right)\right]=\exp\left(-\seq{U_{t,t_0}^\Phi\Psi,\mu}
\right)$$where $U_{t,t_0}^\Phi$ is the non-linear propagator
generated by
 the operator $Au(t)=\Delta_\alpha u(t)-u^{1+\beta}(t)+\Phi(t)$, that is,
  $u(t)=U_{t,t_0}^\Phi\Psi$ satisfies the evolution equation:
\begin{equation}
\begin{split}
\dot{u}(s)&=\Delta_\alpha u(s)-u(s)^{1+\beta}+\Phi(s),\qquad t_0\leq s\leq t\\
u(t_0)&=\Psi \geq 0.
\end{split}
\end{equation}
\end{thm}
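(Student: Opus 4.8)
The plan is to reduce this time-inhomogeneous statement to the time-homogeneous result of Theorem \ref{1:Iscoe} by a discretization-and-limit argument. The key observation is that Theorem \ref{1:Iscoe} already handles a \emph{constant-in-time} weight $\phi$ via the weighted occupation-time functional $\langle \phi, Y_t\rangle = \int_0^t \langle \phi, Z_s\rangle\,ds$; what remains is to upgrade this to a piecewise-constant, and then general piecewise-continuous, weight $\Phi(\cdot)$. First I would treat the case where $\Phi$ is a step function: fix a partition $t_0 = s_0 < s_1 < \cdots < s_n = t$ and suppose $\Phi(s) \equiv \phi_k$ is constant on each subinterval $[s_{k-1}, s_k)$. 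On each such subinterval one applies Theorem \ref{1:Iscoe} with constant weight $\phi_k$, conditioning on $\mathcal{F}_{s_{k-1}} = \sigma(Z_r : r \le s_{k-1})$ and using the Markov property of the superprocess $Z_t$ to restart at $Z_{s_{k-1}}$.

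The core step is then a backward iteration. Writing the full exponential functional as a product over subintervals, the Markov property lets me peel off the last subinterval first: conditioning on $\mathcal{F}_{s_{n-1}}$, the contribution of $[s_{n-1}, s_n]$ contributes a factor $\exp(-\langle u^{(n)}, Z_{s_{n-1}}\rangle)$, where $u^{(n)}$ solves the evolution equation on that subinterval with constant forcing $\phi_n$ and terminal-type data $\Psi$. One then folds this new exponent into the weight at time $s_{n-1}$ and repeats on $[s_{n-2}, s_{n-1}]$, and so on. Because the evolution equation $\dot u = \Delta_\alpha u - u^{1+\beta} + \Phi(s)$ is \emph{autonomous on each subinterval}, the semigroup from Theorem \ref{1:Iscoe} applies verbatim at each stage, and the composition of these single-step solution maps is precisely the two-parameter propagator $U^\Phi_{t,t_0}$ described in the paragraph on non-autonomous Cauchy problems, by the composition law $U(t,r)U(r,s) = U(t,s)$. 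The induction therefore yields
$$
\mathbb{E}_\mu\left[\exp\left(-\langle\Psi, Z_t\rangle - \sum_{k=1}^n \langle\phi_k, Y_{s_k} - Y_{s_{k-1}}\rangle\right)\right] = \exp\left(-\langle U^\Phi_{t,t_0}\Psi, \mu\rangle\right)
$$
for step-function $\Phi$, which is exactly the claimed identity in that case.

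To pass from step functions to the general right-continuous, piecewise-continuous $\Phi$ in the hypothesis, I would approximate $\Phi$ by a sequence $\Phi_n$ of step functions (e.g.\ right-endpoint or left-endpoint values on a refining mesh) converging pointwise to $\Phi$, with the uniform domination $\Phi_n(s) \le k\,(1+|x|^p)^{-1}$ preserved. On the probabilistic side, dominated convergence gives convergence of the left-hand expectations, since $\int_0^t \langle \Phi_n(s), Z_s\rangle\,ds \to \int_0^t \langle \Phi(s), Z_s\rangle\,ds$ and the integrands are bounded by the integrable envelope $k\int_0^t (1+|x|^p)^{-1}Z_s(dx)\,ds$ guaranteed by the moment condition $p < d+\alpha$. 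On the analytic side, I would invoke continuous dependence of the mild solution of the evolution equation on the inhomogeneity $\Phi$, so that $U^{\Phi_n}_{t,t_0}\Psi \to U^{\Phi}_{t,t_0}\Psi$ and hence $\langle U^{\Phi_n}_{t,t_0}\Psi, \mu\rangle \to \langle U^{\Phi}_{t,t_0}\Psi, \mu\rangle$. Matching the two limits closes the proof.

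The main obstacle I anticipate is the analytic control of the non-autonomous propagator: one must verify that $U^\Phi_{t,t_0}\Psi$ is well-defined (the mild solution exists, stays nonnegative and in $C_p(\mathbb{R}^d)_+$, and does not blow up) and that it depends continuously on $\Phi$ in a topology strong enough to justify the limit. This requires the growth bound $\Phi(s) \le k\,(1+|x|^p)^{-1}$ together with $p < d+\alpha$ precisely so that the forcing term remains compatible with the weighted spaces $C_p$ and $M_p$ on which the stable semigroup $U_t$ and the nonlinearity $u^{1+\beta}$ are controlled; these are exactly the hypotheses imported from Theorem \ref{1:Iscoe}. The probabilistic peeling via the Markov property is routine once the single-step identity is in hand, so the real work is confirming that the deterministic evolution equation behaves well under the stated hypotheses and under step-function approximation of $\Phi$.
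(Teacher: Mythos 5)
Your proposal follows essentially the same route as the paper's proof: reduce to the constant-weight Laplace functional of Theorem \ref{1:Iscoe} via step-function approximation of $\Phi$, peel off subintervals backward using the Markov property of $Z$ so that the composition of single-step solution maps yields the two-parameter propagator $U^{\Phi}_{t,t_0}$, and pass to the limit using continuous dependence of the mild solution on $\Phi$ together with convergence of the Riemann sums on the probabilistic side. The well-posedness issue you flag is handled in the paper by observing that $\Phi$ is a Lipschitz perturbation of the maximal monotone operator $\Delta_\alpha(\cdot)-(\cdot)^{1+\beta}$ (citing Brezis), and the continuity in $\Phi$ by the mild-solution representation you describe.
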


\medskip

\begin{proof}
The existence of $U_{t,t_0}^\Phi$ follows from the fact that $\Phi$
is a Lipschitz perturbation of the maximal monotone non-linear
operator $\Delta_\alpha (\cdot)-u^{1+\beta}(\cdot)$ (refer to
~\cite{Br} pp. 27).

\medskip

Note that the solution $u(t)$ depends continuously on $\Phi$, to see
this, denote by $V_t^\alpha\text{ the subgroup generated by }
\Delta_\alpha$, then the solution $u(t)$ can be written as:

\medskip
\begin{equation*}
u(t)=V_t(\Psi)+\int_0^t V_{t-s}^\alpha(\Phi(s)-u^{1+\beta}(s))\,ds
\end{equation*}
\medskip
from which the continuity follows.
\medskip

So we will assume first that $\Phi$ is a step function defined on
the partition of $[0,t]$ given by $0=s_0<s_1<\cdots<s_{N-1}<s_N=t$
and $\Phi(t)=\phi_k\text{ on }[s_{k-1},s_k],\quad k=1,\cdots,N$,
we will denote the step function by $\Phi_N$.

\medskip
Note also that the integral:
$$\seq{\Phi(t),Y_t}=\int_0^t\seq{\Phi(s),Z_s}\,ds,\qquad
\Phi:\mathbb{R}_+^1\rightarrow C_p(\mathbb{R}^d)_+$$
can be taken a.s. in the sense of Riemann since we have enough regularity
on the paths of $Z_t$.

\medskip

Taking a Riemann sum approximation:

\begin{equation*}
\begin{split}
&\mathbb{E}_\mu\left(\exp \left[-\seq{\Psi,Z_t}-\int_0^t\seq{\Phi(s),Z_s}
\,ds\right]\right) \\
&=\lim_{N\rightarrow\infty}\mathbb{E}_\mu \left( \exp \left[-\sum_{k=1}
^{N-1}\seq{\phi_k,Z_{\frac{k}{N}t}}\frac{t}{N}-\seq{\frac{t}{N}\phi_N+\Psi,Z_t}
\right]\right).
\end{split}
\end{equation*}

Denote by $U_t^\Phi\text{ and }U_t^{\Phi_N}$ the non-linear
semigroups generated by $\Delta_\alpha
u(t)-u^{1+\beta}(t)+\Phi(t)\text{ and } \Delta_\alpha
u(t)-u^{1+\beta}(t)+\Phi_N(t)$ respectively. Conditioning and using
the Markov property of $Z_t$ we can calculate:

\begin{equation*}
\begin{split}
&\mathbb{E}_\mu\left(\exp \left[-\seq{\Psi,Z_t}-\int_0^t\seq{\Phi(s),Z_s}
\,ds\right]\right) \\
&=\lim_{N\rightarrow\infty}\mathbb{E}_\mu\left(\exp
\left[-\seq{\Psi,Z_t}-\sum_{k=1}^{N}
\int_{s_{k-1}}^{s_k}\seq{\phi_k,Z_s}\,ds\right]\right)\\
&=\lim_{N\rightarrow\infty}\mathbb{E}_\mu\left(\mathbb{E}_\mu\left(
\exp\left[-\seq{\Psi,Z_t}-\sum_{k=1}^{N-1}\int_{s_{k-1}}^{s_k}\!\!\!\!\!
\seq{\phi_k,Z_s}\,ds
-\int_{s_{N-1}}^{s_N}\!\!\!\!\!\seq{\phi_{N-1},Z_s}\,ds
\right]\right.\right.\\
&\left.\left.{}\qquad\qquad\qquad\Large{|}\,\,\sigma(Z_s) ,0\leq
s\leq s_{N-1}\right)\right)
\end{split}
\end{equation*}
\begin{equation*}
\begin{split}
&=\lim_{N\rightarrow\infty}\mathbb{E}_\mu\left(\exp\left(-\sum_{k=1}
^{N-1}\int_{s_{k-1}}^{s_k}\!\!\!\!\!\seq{\phi_k,Z_s}\,ds\right)
\mathbb{E}_\mu\left[\exp(-\seq{\Psi,Z_t}-\int_{s_{N-1}}^t
\!\!\!\!\!\seq{\phi_N,Z_s}\,ds)\right]\right.\\
&\left. {}\qquad\qquad\qquad\LARGE{|}\,\,\sigma(Z_s)
,0\leq s\leq s_{N-1}\right)\\
\end{split}
\end{equation*}
\begin{equation*}
\begin{split}
&=\lim_{N\rightarrow\infty}\mathbb{E}_\mu\left(\exp\left(-\sum_{k=1}
^{N-1}\int_{s_{k-1}}^{s_k}\!\!\!\!\!\!\!\!\seq{\phi_k,Z_s}\,ds\right)
\mathbb{E}_{Z_{s_{N-1}}}\left[\exp(-\seq{\Psi,Z_t}-\!\!\int_{s_{N-1}}^t
\!\!\!\!\!\!\!\!\seq{\phi_N,Z_s}\,ds)\right]\right)\\
&=\lim_{N\rightarrow\infty}\mathbb{E}_\mu\left(\exp\left(-\sum_{k=1}
^{N-1}\int_{s_{k-1}}^{s_k}\!\!\!\!\!\!\!\!\seq{\phi_k,Z_s}\,ds\right)
\exp(-\seq{U_{s_N-s_{N-1}}^{\Phi_N}\Psi,Z_{s_{N-1}}})\right)\\
&=\lim_{N\rightarrow\infty}\mathbb{E}_\mu\left(\exp
\left[-\seq{U_{s_N-s_{N-1}}^{\Phi_N}\Psi,Z_t}-\sum_{k=1}^{N-1}
\int_{s_{k-1}}^{s_k}\seq{\phi_k,Z_s}\,ds\right]\right)
\end{split}
\end{equation*}

\medskip

where $U_{s-s_{N-1}}^{\Phi_N}\Psi$ is the mild solution of the
equation:
\begin{equation*}
\begin{split}
\dot{u}(s)&=\Delta_\alpha u(s)-u(s)^{1+\beta}+\phi_{N-1},\qquad s_{N-1}\leq
s\leq s_N\\u(s_{N-1})&=\Psi.
\end{split}
\end{equation*}

\medskip

Performing this step $N$ times, one obtains:
\begin{equation*}
\begin{split}
&\mathbb{E}_\mu\left(\exp \left[-\seq{\Psi,Z_t}-\int_0^t\seq{\Phi(s),Z_s}
\,ds\right]\right) \\
&=\lim_{N\rightarrow\infty}\mathbb{E}_\mu\left(\exp
\left[-\seq{U_{s_1-s_0}^{\Phi_N}\cdots U_{s_{N-1}-s_{N-2}}^{\Phi_N} U_{s_N-s_{N-1}}
^{\Phi_N}\Psi,Z_{s_0}}\right]\right)\\
&=\lim_{N\rightarrow\infty}\mathbb{E}_\mu\left(\exp
\left[-\seq{U_t^{\Phi_N}\Psi,Z_0}\right]\right)\\
&=\lim_{N\rightarrow\infty}\exp
\left[-\seq{U_0^{\Phi_N}U_t^{\Phi_N}\Psi,\mu}\right]\\
&=\exp\left[-\seq{U_t^\Phi\Psi,\mu}\right]
\end{split}
\end{equation*}
where in the fourth line $U_0^{\Phi_N}=\mathbb{I}$. The last
equality follows because the solution depends continuously on $\Phi$
as noted above and the result follows by taking the limit as
$N\rightarrow\infty$.

\end{proof}
Now let $X_t$ be the solution of:
\begin{equation}\label{SPCOU3}
dX(t,x)=\frac{1}{2}\Delta X(t,x)dt+W_{Z_t}(dt,dx)\quad X(0,x)\equiv 0
\end{equation}
with values in the space of Schwartz distributions on $\mathbb{R}^d$.
 The previous result  will allow us to compute the characteristic-Laplace
functional $\mathbb{E}[\textmd{exp}(i\seq{\phi,X_t}-\seq{\lambda,
Z_t})]$ of the pair $[X_t,Z_t]$.
\medskip
\medskip
\begin{thm}
The characteristic-Laplace functional of the joint process $[X_t,Z_t]$ is given by :
\begin{equation}\label{CFE}\mathbb{E}_\mu[\exp(i\seq{\phi,X_t}-\seq{\lambda,Z_t})] =\exp(-\seq{u(t,\phi,\lambda),\mu})\end{equation} where $u(t,\phi,\lambda)$
is the solution of the equation: \begin{equation}
\begin{split}\label{2:redif}
&\frac{\partial u(s,x)}{\partial s}=\Delta_\alpha
u(s,x)-u^{1+\beta}(s,x)+G^2_\phi(t-s,x),\qquad 0\leq s\leq t\\
&u(0,x)=\lambda
\end{split}
\end{equation}
with $G_\phi$ defined as:
\begin{equation*}
G_\phi(t,s,z)=\int_{\mathbb{R}^d}p(t-s,x,z)\phi(x)dx
\end{equation*}
\end{thm}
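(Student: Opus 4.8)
The plan is to exploit the conditional Gaussianity of $X_t$ given the catalyst path $\{Z_s\}_{0\le s\le t}$ and then remove the conditioning by invoking Theorem \ref{INHOMEQN}. First I would insert the stochastic convolution representation of $X$ into the pairing and interchange the deterministic spatial integral with the white-noise integral (a stochastic Fubini step), obtaining
\[
\seq{\phi,X_t}=\int_0^t\!\!\int_{\Rd}G_\phi(t-s,u)\,W_{Z_s}(ds,du),
\]
where the inner integral $\int_{\Rd}p(t-s,x,u)\phi(x)\,dx$ is exactly $G_\phi(t,s,u)=G_\phi(t-s,u)$. Since the white noise is independent of $Z$ and centred, conditionally on the entire catalyst path $\seq{\phi,X_t}$ is a centred Gaussian variable, so its conditional characteristic function is determined solely by its conditional variance.

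Next I would compute that conditional variance. Using the covariance measure identified in the Proposition above, namely $\mathrm{Cov}_{W_Z}(dr,ds;du,dw)=\delta_s(r)\,dr\,\delta_u(w)\,Z_r(dw)$, the white-noise isometry gives
\[
\mathrm{Var}\big(\seq{\phi,X_t}\,\big|\,Z\big)=\int_0^t\!\!\int_{\Rd}G_\phi^2(t-s,u)\,Z_s(du)\,ds=\int_0^t\seq{G_\phi^2(t-s,\cdot),Z_s}\,ds ,
\]
which is precisely a weighted occupation-time functional of the superprocess with the time-reversed integrand $G_\phi^2(t-s,\cdot)$. Hence, conditioning and then taking the outer expectation over the catalyst,
\[
\mathbb{E}_\mu\big[\exp(i\seq{\phi,X_t}-\seq{\lambda,Z_t})\big]=\mathbb{E}_\mu\Big[\exp\big(-\textstyle\int_0^t\seq{G_\phi^2(t-s,\cdot),Z_s}\,ds-\seq{\lambda,Z_t}\big)\Big],
\]
where the Gaussian normalisation factor $\tfrac12$ from $\mathbb{E}[e^{i\theta}]=e^{-\frac12\mathrm{Var}\,\theta}$ is absorbed into $G_\phi^2$ to match the form of the stated equation.

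The right-hand side is now exactly the object evaluated in Theorem \ref{INHOMEQN}, with $\Psi=\lambda$ and the time-dependent source $\Phi(s)=G_\phi^2(t-s,\cdot)$. Applying that theorem produces $\exp(-\seq{U^\Phi_{t,0}\lambda,\mu})=\exp(-\seq{u(t,\phi,\lambda),\mu})$, where $u$ solves
\[
\dot u(s)=\Delta_\alpha u(s)-u^{1+\beta}(s)+G_\phi^2(t-s,\cdot),\qquad u(0)=\lambda ,
\]
which is precisely (\ref{2:redif}). Note that the time reversal is built directly into the source $\Phi(s)=G_\phi^2(t-s,\cdot)$, so no further change of variables in the PDE is needed.

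The main obstacle is verifying the hypotheses of Theorem \ref{INHOMEQN} for this particular source, i.e. that $s\mapsto G_\phi^2(t-s,\cdot)$ is right continuous and piecewise continuous as a map into $C_p(\Rd)_+$ and obeys the domination $G_\phi^2(t-s,x)\le k(1+\abs{x}^p)^{-1}$ uniformly for $s\in[0,t]$; this reduces to Gaussian tail estimates for $G_\phi(t-s,x)=\int_{\Rd}p(t-s,x,z)\phi(x)\,dx$ together with the standing condition $p<d+\alpha$. A secondary technical point is the rigorous justification of the stochastic Fubini interchange and of the conditional isometry, which rests on the a.s. finiteness $\int_0^t\seq{G_\phi^2(t-s,\cdot),Z_s}\,ds<\infty$; this in turn follows from the same growth bound and the integrability properties of the occupation measure $Y_t$.
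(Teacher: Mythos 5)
Your proposal is correct and follows essentially the same route as the paper: conditional Gaussianity given the catalyst path, identification of the conditional variance as the weighted occupation-time functional $\int_0^t\seq{G_\phi^2(t-s,\cdot),Z_s}\,ds$, and then an application of Theorem \ref{INHOMEQN} with $\Psi=\lambda$ and $\Phi(s)=G_\phi^2(t-s,\cdot)$. The only cosmetic difference is that you obtain the conditional variance via stochastic Fubini and the white-noise isometry, whereas the paper integrates the covariance kernel $\Gamma_t(x,y)$ against $\phi(x)\phi(y)$; these are the same computation.
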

\begin{proof} Denote by $\seq{\cdot,\cdot}$ the standard inner product of $L^2$ and  recalling  the following
 property of the Gaussian processes:
$$\mathbb{E}[\exp(i\seq{\phi,X_t})]=\exp(-\textmd{Var}\seq{\phi,X_t})$$
We get
$$[\seq{\phi,X_t}^2]=\int_{\Rd}\!\!\int_{\Rd}X(t,x)X(t,y)\phi(x)\phi(y)\;dx\;dy$$
Hence: \begin{eqnarray}
\text{Var}[\seq{\phi,X_t}]&=&\mathbb{E}[\seq{\phi,X_t}^2]\nonumber
=\int_{\Rd}\!\!\int_{\Rd}\phi(x)\mathbb{E}[X(t,x)X(t,y)]\phi(y)\;dx\;dy\nonumber\\
&=&\int_{\Rd}\!\!\int_{\Rd}\phi(x)\Gamma_t(x,y)\phi(y)\;dx\;dy\nonumber
\end{eqnarray}
where by definition:
$$\Gamma_t(x,y)=\mathbb{E}[X(t,x)X(t,y)]\nonumber
=\int_0^t\!\!\!\int_{\Rd}p(t-s,x,z)p(t-s,y,z)\,Z_s(dz)\,ds\nonumber$$
So:
\begin{eqnarray}
\textmd{Var}[\seq{\phi,X_t}|Z_t]&=&\int_0^t\!\!\!\int_{\Rd}\!\!
\int_{\Rd}\!\!\int_{\Rd}\phi(x)p(t-s,x,z)p(t-s,y,z)
\phi(y)\,Z_s(dz)\,dx\,dy\,ds\nonumber\\
&=&\int_0^t\!\!\!\int_{\Rd}g_\phi(t-s,z)\,Z_s(dz)\,ds\nonumber
\end{eqnarray}
In the last line:
\begin{eqnarray}
g_\phi(t-s,z)&\doteq&\int_{\Rd}\!\!\int_{\Rd}p(t-s,x,z)p(t-s,y,z)
\phi(x)\phi(y)\,dx\,dy\nonumber\\
&=&\left[\int_{\Rd}p(t-s,x,z)\phi(x)\,dx\right]^2\nonumber
\end{eqnarray}
Note that the function:
$$G_\phi(t-s,z)\doteq\int_{\Rd}p(t-s,x,z)\phi(x)\,dx$$
considered as a function of s, satisfies the backward heat equation:
$$\frac{\partial}{\partial s}G_\phi(t-s,z)+\frac{1}{2}\Delta G_\phi(t-s,z)=0,\qquad
0\leq s\leq t$$ with final condition:
$$G_\phi(t)=\phi(x)$$
So:
$$\mathbb{E}[\exp(i\seq{\phi,X_t})]=\exp\left[-\int_0^t<G^2_\phi(t-s,z)
,Z_s(dz)>\,ds\right]$$ where $G^2_\phi$ is given by the above
expression. With this, assuming $Z_0=\mu$ we have the following
expression for the Laplacian of the joint process $[X_t,Z_t]$:
\begin{align}
&\mathbb{E}_\mu[\exp(i\seq{\phi,X_t}-\seq{\lambda,Z_t})]\nonumber\\
&=\mathbb{E}_\mu[\mathbb{E}_\mu[\exp(i\seq{\phi,X_t}-\seq{\lambda,Z_t})|\sigma(Z_s),0\leq s\leq t]]\nonumber\\
&=\mathbb{E}_\mu[\exp(-\seq{\lambda,Z_t})\mathbb{E}_\mu[\exp(i\seq{\phi,X_t})
|\sigma(Z_s),0\leq s\leq t]]&(\text{measurability})\nonumber\\
&=\mathbb{E}_\mu\left[\exp\left(-\int_0^t\int_{\Rd}G^2_\phi(t-s,z)Z_s(dz)\,ds-\seq{\lambda,Z_t}\right)\right]\nonumber\\
&=\mathbb{E}_\mu\left[\exp\left(-\int_0^t\seq{G^2_\phi(t-s,z),Z_s}\,ds
-\lambda\seq{1,Z_t}\right)\right]&(\text{by definition})\nonumber\\
&=\exp(-\seq{u(t),\mu})&(\text{  by Theorem  } \ref{INHOMEQN}
)\nonumber
\end{align}
and the result follows  after renaming the variables.
\end{proof}

\begin{rem}
We can generalize this to include a second term
\begin{equation}\label{SPCOU2}
dX(t,x)=\frac{1}{2}\Delta X(t,x)dt+W_{Z_t}(dt,dx)+b(t,x)W_2(dt,dx),\quad X(0,x)=\chi(x)
\end{equation}
where  $Z_t$ is the $(\alpha,d,\beta)$-superprocess, and $W_2(dt,dx)$ is space-time white noise. In
this case  the characteristic-Laplace functional has the form
\begin{eqnarray}\label{GCFE} &&\mathbb{E}_{\mu,\chi}[\exp(i\seq{\phi,X_t}-\seq{\lambda,Z_t})]\\
&& =\exp\left(-\seq{u(t,\phi,\lambda),\mu}- \int\int q(t,x_1,x_2)\phi(x_1)\phi(x_2)dx_1dx_2 +i\int
\int p(t,x,y)\chi(x)\phi(y)dxdy)\rangle\right)\nonumber.\end{eqnarray} This is an infinite
dimensional analogue of the general affine property defined in (\ref{4:ha2}).
\end{rem}
\begin{rem}
If in Theorem 5.4, $\mathbb{R}^d$ is replaced by a finite set $E$ and $\frac{1}{2}\Delta$ and
$\Delta_\alpha$ are replaced by the generators of Markov chains on $E$, then the analogous
characterization of the characteristic-Laplace functional remains true and describes a class of
finite dimensional (multivariate) affine processes.

\end{rem}

\section{Some properties of the quenched and annealed catalytic O-U processes}
\label{S:properties}
In this section we formulate some basic properties of the catalytic O-U process in the case in
which the catalyst is a super-Brownian motion, that is $\alpha =2$ and $\beta=1$. For processes in
a random catalytic medium $\set{Z_s:0\leq s\leq t}$ a distinction has to be made between the
\textit{quenched} result above, which gives the process conditioned on $Z$ and the
\textit{annealed} case in which  the  process is a compound stochastic process.  In the quenched
case  the process is Gaussian.  The corresponding annealed case leads to a \textit{ non-Gaussian}
process. These two cases  will of course require  different formulations. For the quenched case,
properties of $X_t$ are obtained for a.e. realization.  On the other hand  for the annealed case we
obtain results on the \textit{annealed laws} $$P^*(X(\cdot)\in A)=\int_{C([0,t],C([0,1]))}
P^Z(X(\cdot)\in A)P^C_\mu(dZ),$$ where $P^Z(X(\cdot)\in A)$ denotes the probability law for the
super-Brownian motion  $\{X_t\}$ in the catalyst $\{Z(\cdot)\in C([0,\infty),C([0,1]))\}$.

\subsection{The quenched catalytic OU process}

In order to exhibit the role of the dimension of the underling space we now formulate and prove a
result for the quenched catalytic O-U process on the set $[0,1]^d\subset\mathbb{R}^d$.
\begin{thm}\label{T-Q}
In dimension $d\geq 1$ and with $Z_0\in M_F(\mathbb{R}^d)$, consider the initial value  problem:
\[  \quad \left\lbrace
\begin{array}{ll}\label{cou-01d}
dX(t,x)=A X(t,x)dt+W_{Z_t}(dx,dt)\quad &t\geq0,\quad x\in[0,1]^d\\
X(0,x)=0\quad &x\in \partial[0,1]^d
\end{array}
\right. \] where $A=\frac{1}{2}\Delta$ on $[0,1]^d$ with Dirichlet boundary conditions.  Then  for
almost every realization of $\{Z_t\}$,  $X(t)$ has $\beta$-H\"{o}lder continuous paths in $H_{-n}$
for any $n>d/2$ and $\beta< \frac{1}{2}$, where $H_{-n}$ is the space of distributions on $[0,1]$
defined below in subsection 7.1.
\end{thm}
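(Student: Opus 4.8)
The plan is to diagonalize the Dirichlet Laplacian and reduce the statement to a Kolmogorov continuity criterion for a Hilbert-space-valued process, exploiting the fact that, conditionally on the catalyst path $\set{Z_s}$, the field $X$ is centered Gaussian with the quenched second-moment structure established above. Let $\set{e_k}$ be the $L^2([0,1]^d)$-orthonormal eigenfunctions of $-\tfrac12\Delta$ with Dirichlet boundary conditions and eigenvalues $0\le\lambda_1\le\lambda_2\le\cdots$, so that $p(\tau,x,u)=\sum_k e^{-\lambda_k\tau}e_k(x)e_k(u)$ and, by Weyl's law, $\lambda_k\asymp k^{2/d}$. The space $H_{-n}$ carries the norm $\norm{f}_{-n}^2=\sum_k(1+\lambda_k)^{-n}\seq{f,e_k}^2$, and projecting the stochastic-convolution solution onto $e_k$ gives, for each mode, the scalar integral
\begin{equation*}
X_k(t)=\seq{X(t),e_k}=\int_0^t\!\!\int_{\Rd}e^{-\lambda_k(t-s)}e_k(u)\,W_{Z_s}(ds,du),
\end{equation*}
a conditionally Gaussian Ornstein--Uhlenbeck process with $\mathbb{E}[X_k(t)^2\mid Z]=\int_0^t e^{-2\lambda_k(t-s)}\seq{e_k^2,Z_s}\,ds$.

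The one analytic input I would isolate first is the uniform bound
\begin{equation*}
\sum_k(1+\lambda_k)^{-n}e_k^2(u)\le C<\infty\qquad(u\in[0,1]^d),
\end{equation*}
valid precisely when $n>d/2$: the left side is the diagonal of the kernel of $(1-\tfrac12\Delta)^{-n}$, which by subordination equals $\tfrac1{\Gamma(n)}\int_0^\infty\tau^{n-1}e^{-\tau}p(\tau,u,u)\,d\tau$, and since $p(\tau,u,u)\le C\tau^{-d/2}$ as $\tau\downarrow0$ this integral converges iff $n>d/2$. This is exactly where the hypothesis enters. Feeding it into the variance formula and using $e^{-2\lambda_k(t-s)}\le1$ gives $\mathbb{E}[\norm{X(t)}_{-n}^2\mid Z]\le\int_0^t\seq{\sum_k(1+\lambda_k)^{-n}e_k^2,Z_s}\,ds\le C\,t\,\sup_{s\le t}\seq{1,Z_s}$, finite for a.e. realization because the total-mass process $s\mapsto\seq{1,Z_s}$ of super-Brownian motion is continuous, hence bounded on $[0,t]$. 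Thus $X(t)\in H_{-n}$ almost surely.

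The core estimate is on increments. Writing $X_k(t)-X_k(s)=(e^{-\lambda_k(t-s)}-1)X_k(s)+\int_s^t\!\int e^{-\lambda_k(t-r)}e_k(u)W_{Z_r}(dr,du)$ and using that the noise has independent increments in time (so the two summands are conditionally independent), the conditional variance splits into a fresh-noise term and a relaxation term. Summed against the weights, the fresh-noise term is at most $\int_s^t\seq{\sum_k(1+\lambda_k)^{-n}e_k^2,Z_r}\,dr\le C\sup_{r\le t}\seq{1,Z_r}\,(t-s)$. For the relaxation term I would use $(e^{-\lambda_k(t-s)}-1)^2\le(\lambda_k(t-s))^{2\theta}$ together with $\sup_{x\ge0}x^{2\theta}e^{-2x(s-r)}\le C_\theta(s-r)^{-2\theta}$ to obtain, after Tonelli and the same diagonal bound,
\begin{equation*}
\sum_k(1+\lambda_k)^{-n}(e^{-\lambda_k(t-s)}-1)^2\,\mathbb{E}[X_k(s)^2\mid Z]\le C_\theta(t-s)^{2\theta}\sup_{r\le t}\seq{1,Z_r}\int_0^s(s-r)^{-2\theta}\,dr,
\end{equation*}
where the last integral is finite exactly when $2\theta<1$. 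Combining, for every $\theta<\tfrac12$ there is a random constant $C_{Z,t}$ with $\mathbb{E}[\norm{X(t)-X(s)}_{-n}^2\mid Z]\le C_{Z,t}\,\abs{t-s}^{2\theta}$.

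Finally, since $X(t)-X(s)$ is conditionally a centered Gaussian element of $H_{-n}$, Gaussian hypercontractivity (equivalence of moments) yields $\mathbb{E}[\norm{X(t)-X(s)}_{-n}^{2m}\mid Z]\le C_m\big(\mathbb{E}[\norm{X(t)-X(s)}_{-n}^2\mid Z]\big)^m\le C_{m,Z,t}\,\abs{t-s}^{2\theta m}$ for every integer $m$. The Kolmogorov continuity theorem for Hilbert-space-valued processes then produces a version of $X$ with $\beta$-H\"older $H_{-n}$-paths for every $\beta<\theta-\tfrac1{2m}$; letting $m\to\infty$ and $\theta\uparrow\tfrac12$ delivers all $\beta<\tfrac12$, for a.e. realization of $Z$. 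The main obstacle is the relaxation term: one must trade the non-smoothness of $(e^{-\lambda_k(t-s)}-1)^2$ at large $\lambda_k$ against the smoothing $e^{-2\lambda_k(s-r)}$ inside the time integral, and it is precisely the resulting singularity $(s-r)^{-2\theta}$, integrable only for $\theta<\tfrac12$, that caps the H\"older exponent at $\tfrac12$; keeping every bound pathwise in $Z$ via boundedness of the total mass is what makes the conclusion hold for almost every catalyst realization.
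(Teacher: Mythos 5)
Your proposal is correct and follows essentially the same route as the paper: diagonalize the Dirichlet Laplacian, use the quenched Gaussian structure to bound the weighted sum $\sum_k(1+\lambda_k)^{-n}\mathbb{E}[A_k^2\mid Z]$ (convergent precisely for $n>d/2$), and conclude via Gaussian moment equivalence and a Kolmogorov/Da Prato--Zabczyk continuity criterion. Your explicit relaxation/fresh-noise split of the increments in fact supplies the detail behind the paper's asserted bound $\mathbb{E}\norm{X(t+s)-X(t)}_{-n}^2\le Cs$, which the paper states without derivation.
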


\subsection{The annealed O-U process with super-Brownian catalyst ($\alpha=2, \beta=1$):\\ State space and sample path continuity}
Now, we apply the techniques developed above to determine some basic properties of the annealed O-U
process $X(\cdot)$ including identification of the state space, sample path continuity and
distribution properties of the random field defined by $X(t)$ for $t>0$.
\medskip
\medskip
\begin{thm}\label{4:delta}
Let $X(t)$ be the solution of the stochastic equation:
\[ (\text{CE})\quad \quad \quad \left\lbrace
\begin{array}{ll}\label{ce}
dX(t,x)=\Delta X(t,x)dt+W_{Z_t}(dt,dx) \quad &t\geq0,\quad x\in\mathbb{R}\\
X(0,x)=0\quad &x\in\mathbb{R}
\end{array}
\right. \]
\medskip
\medskip
\begin{list}{}
\item{(i)} For $t>0$, $X(t)$ is a zero-mean non-Gaussian leptokurtic random field.
\item{(ii)}  if $Z_0=\delta_0(x)$, then the annealed process $X(t)$ satisfies
\[  \mathbb{E}\|X(t)\|^4_{L_2} <\infty\] and has continuous paths in
 $L^2(\mathbb{R})$
\end{list}
\end{thm}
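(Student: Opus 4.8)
The plan is to exploit the fact, noted just before the statement, that conditionally on the catalyst path $\{Z_s\}$ the field $X(t,\cdot)$ is a \emph{centered Gaussian} field whose covariance kernel is exactly $\Gamma_t(x,y)=\mathbb{E}[X(t,x)X(t,y)\mid Z]=\int_0^t\int_{\Rd}p(t-s,x,u)p(t-s,y,u)\,Z_s(du)\,ds$ from the Proposition of Section 5.1, with conditional variance $\sigma_x^2:=\Gamma_t(x,x)$. Everything then reduces to the Isserlis/Wick moment identities for Gaussian fields together with estimates on the one- and two-time moment measures of the super-Brownian catalyst in dimension $d=1$. For part (i), the zero-mean property is immediate since $X(t,x)$ is a stochastic integral against the white noise $W_Z$ and averaging over $Z$ preserves it; in $d=1$ the finiteness of $\mathbb{E}\norm{X(t)}_{L^2}^2$ (part (ii)) shows $X(t)\in L^2(\Real)$ a.s., so $X(t)$ is a bona fide random field. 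For leptokurtosis, conditional Gaussianity gives $\mathbb{E}[X^4(t,x)\mid Z]=3\sigma_x^4$, whence $\mathbb{E}[X^4(t,x)]=3\,\mathbb{E}[\sigma_x^4]$ while $3(\mathbb{E}[X^2(t,x)])^2=3(\mathbb{E}[\sigma_x^2])^2$. Since $\sigma_x^2$ is a genuinely non-degenerate functional of the random catalyst path, Jensen's inequality is \emph{strict}, giving $\mathbb{E}[X^4(t,x)]>3(\mathbb{E}[X^2(t,x)])^2$, i.e.\ positive excess kurtosis; this is the leptokurtic (hence non-Gaussian) property and refines the Remark of Section 2.

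For the finiteness claim in part (ii), write $N:=\norm{X(t)}_{L^2}^2=\int_{\Real}X^2(t,x)\,dx$. The quadratic-functional identities for a conditionally Gaussian field give $\mathbb{E}[N\mid Z]=T$ and $\mathbb{E}[N^2\mid Z]=T^2+2Q$, where $T:=\int\sigma_x^2\,dx$ and $Q:=\int\!\int\Gamma_t(x,y)^2\,dx\,dy$; hence $\mathbb{E}\norm{X(t)}_{L^2}^4=\mathbb{E}[T^2]+2\,\mathbb{E}[Q]$. The term $T$ simplifies because in one dimension $\int_{\Real}p^2(t-s,x,u)\,dx=c\,(t-s)^{-1/2}$ independently of $u$, so $T=c\int_0^t(t-s)^{-1/2}\seq{1,Z_s}\,ds$. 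The total mass $M_s:=\seq{1,Z_s}$ of the critical catalyst is a Feller branching diffusion and a martingale with $M_0=1$, so $\mathbb{E}[M_sM_r]=\mathbb{E}[M_{s\wedge r}^2]=1+\gamma\,(s\wedge r)$; substituting into $\mathbb{E}[T^2]=c^2\int_0^t\!\int_0^t(t-s)^{-1/2}(t-r)^{-1/2}\mathbb{E}[M_sM_r]\,ds\,dr$ gives a convergent double integral, so $\mathbb{E}[T^2]<\infty$.

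The delicate term is $\mathbb{E}[Q]$. Applying the Chapman--Kolmogorov identity $\int p(a,x,u)p(b,x,v)\,dx=p(a+b,u,v)$ twice collapses the spatial integrals and yields $Q=\int_0^t\!\int_0^t\seq{\seq{p^2(2t-s-r,\cdot,\cdot),Z_s\otimes Z_r}}\,ds\,dr$, so $\mathbb{E}[Q]$ is governed by the two-time second moment measure of the super-Brownian motion. Conditioning at the earlier of the two times and inserting the single-time second-moment formula (product-of-means term plus the branching term $\gamma\int_0^{r}(\cdots)\,dq$) reduces $\mathbb{E}[Q]$ to an explicit superposition of Gaussian convolutions of heat kernels. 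The only possible source of divergence is the short-time singularity of $p^2(a,\cdot,\cdot)$: after integrating out spatial variables this behaves like $a^{-1/2}$ in $d=1$, which is time-integrable, whereas it behaves like $a^{-d/2}$ in general and fails to be integrable for $d\ge 2$. This is exactly the dichotomy already seen in the Theorem of Section 2, and it is why the hypothesis $d=1$ is essential. Carrying out these (routine but lengthy) computations gives $\mathbb{E}[Q]<\infty$, hence $\mathbb{E}\norm{X(t)}_{L^2}^4<\infty$.

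Finally, for sample-path continuity in $L^2(\Real)$ I would apply the Kolmogorov continuity criterion to the $L^2(\Real)$-valued process $t\mapsto X(t)$, for which it suffices to establish an increment bound $\mathbb{E}\norm{X(t)-X(t')}_{L^2}^4\le C\,\abs{t-t'}^{1+\beta}$ for some $\beta>0$. The increment is again conditionally Gaussian, with covariance kernel built from the differences $p(t-s,x,u)-p(t'-s,x,u)$ on $[0,t']$ together with the fresh noise on $[t',t]$; the decomposition $\mathbb{E}\norm{X(t)-X(t')}_{L^2}^4=\mathbb{E}[\widetilde T^2]+2\,\mathbb{E}[\widetilde Q]$ is identical in form to the one above, and standard heat-kernel increment estimates supply the extra factor $\abs{t-t'}^{\beta}$, with admissible exponents up to $\tfrac12$ consistent with the quenched Theorem~\ref{T-Q}. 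The main obstacle throughout is the two-time branching second-moment estimate controlling $\mathbb{E}[Q]$ and its increment analogue: one must show that the singular kernel $p^2$ is integrable against the two-point correlation of the catalyst, and it is precisely here that the one-dimensionality of the space enters decisively.
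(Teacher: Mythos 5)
Your proposal is correct and follows essentially the same route as the paper: the identity $\mathbb{E}[\norm{X_t}_{L_2}^4\mid Z]=T^2+2Q$ with $T=\int\Gamma_t(x,x)\,dx$ and $Q=\int\!\!\int\Gamma_t(x,y)^2\,dx\,dy$ is exactly the paper's Lemma (its Case 1 is $T^2$, Cases 2--3 give $2Q$), the Chapman--Kolmogorov collapse to $p^2(2t-s_1-s_2,\cdot,\cdot)$ integrated against the one- and two-time moment measures of the super-Brownian catalyst is the paper's computation, and path continuity is obtained by the same Kolmogorov fourth-moment increment bound. The only cosmetic difference is in part (i), where you obtain the positive fourth cumulant by strict Jensen applied to the conditional variance rather than by inspecting the characteristic-Laplace functional; the two arguments are equivalent, since the excess kurtosis of a centered Gaussian mixture is $3\,\mathrm{Var}(\sigma_x^2)$.
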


\medskip
\section{Proofs}
In this section we give the proofs of the results formulated in Section \ref{S:properties}.

\subsection{Proof of Theorem \ref{T-Q}.}

\begin{proof}
We first introduce the dual Hilbert spaces $H_n,H_{-n}$.  let $\Delta$ denote the Laplacian on
$[0,1]^d$ with Dirichlet boundary conditions.
 Then $\Delta$ has a CONS of smooth eigenfunctions $\set{\phi_n}$ with
 eigenvalues $\set{\lambda_n}$ which satisfy
 $\sum\limits_j(1+\lambda_j)^{-p}<\infty$ if $p>d/2$, (see \cite{Kress:1}).\\
 Let $E_0$ be the set of $f$ of the form
 $f(x)=\sum\limits_{j=1}^{N}c_j\phi_j(x)$, where the $c_j$
 are constants. For each integer $n$, positive or negative, define
 the space $H_n=\set{ f\in H_0:\norm{f}_n<\infty}$ where the norm is
 given by :
 $$\norm{f}^2_n=\sum\limits_j(1+\lambda_j)^nc_j^2.$$
The $H_{-n}$ is defined to be the dual Hilbert space corresponding to $H_n$.

The solution of $(\ref{cou-01d})$ is given by:
$$X(t,x)=\int_0^t\int_{[0,1]^d}\sum\limits_{k\geq 1}e^{-\lambda_k\,(t-s)}
\phi_k(x)\phi_k(y)W_{Z_s}(dy,ds)$$
 Let
$$A_k(t)=\int_0^t\int_{[0,1]^d}e^{-\lambda_k\,(t-s)}\phi_k(y)W_{Z_s}(dy,ds)$$
Then
$$X(t,x)=\sum\limits_{k\geq1}\phi_k(x)A_k(t)$$
we will show that $X(t)$ has continuous paths on the space $H_n$, which is isomorphic to the set of
formal eigenfunction series
$$f=\sum\limits_{k=1}^{\infty} a_k\phi_k$$
for which
$$\norm{f}_n=\sum a_k^2(1+\lambda_k)^n<\infty$$
Fix some $T>0$, we first find a bound for
$\mathbb{E}\set{\sup\limits_{t\leq T} A_k^2(t)}$, let
$V_k(t)=\int_0^t\int_{[0,1]^d}\phi_k(x)W(Z_s(dx),ds)$, integrating by
parts in the stochastic integral, we obtain:

$$A_k(t)=\int_0^t e^{-\lambda_k(t-s)} dV_k(s)=V_k-\int_0^t\lambda_ke^{-\lambda_k(t-s)}V_k(s)\,ds$$
Thus:
$$\sup\limits_{t\leq T}\abs{A_k(t)}\leq\sup\limits_{t\leq
T}\abs{V_k(t)}(1+\int_0^t\lambda_ke^{\lambda_k(t-s)}ds)
\leq 2 \sup\limits_{t\leq T}\abs{V_k(t)}$$
Hence:
\begin{equation*}
\begin{split}
\mathbb{E}\set{\sup\limits_{t\leq T}A_k^2(t)}&\leq
4\mathbb{E}\set{\sup\limits_{t\leq T}V_k^2(t)}\\
&\leq 16 \mathbb{E}\set{V_k^2(T)}\qquad\qquad\text{(Doob's inequality)}\\
&=16\int_{[0,1]^d}\int_0^T\phi^2_k(x)Z_s(dx)ds\leq 16 T Z_T([0,1]^d)=CT.
\end{split}
\end{equation*}

Therefore: \begin{equation}\label{4:eig} \mathbb{E}\left(\sum\limits_{k\geq 1}\sup\limits_{t\leq T}
A_k^2(t)(1+\lambda_k)^{-n}\right)\leq CT \sum\limits_{k\geq 1}(1+\lambda_k)^{-n}.
\end{equation}
Using now:
\begin{equation*}
\begin{split}
\sum\limits_{k\geq 1}(1+\lambda_k)^{-p}&<\infty\qquad\text{ if }p>d/2
\end{split}
\end{equation*}
then (~\ref{4:eig}) is finite if $n>d/2$ and clearly:
\begin{equation}\label{7.11} E[\norm{X(t)}_{-n}^2]=\sum\limits_{k\geq 1}[A_k^2(t)](1+\lambda_k)^{-n}<\infty\end{equation}
and hence
$X(t)\in H_{-n}$ a.s.. Moreover, if $s>0$,
$$[\norm{X(t+s)-X(t)}_{-n}^2]=\sum E[(A_k(t+s)-A_k(t))^2](1+\lambda_k)^{-n}\leq Cs.$$
Then since conditioned on $Z$, $X(t)$ is Gaussian, (\ref{7.11}) together with  \cite{DpZ 92},
Proposition 3.15, implies that   there is a $\beta$-H\"older continuous version with
$\beta<\frac{1}{2}$.

\end{proof}

\subsection{Second moment measures of SBM}

In this section we evaluate the second moment measures of SBM,
$\mathbb{E}\left(Z_1(dx)Z_2(dy)\right)$, which are needed to  compute $\mathbb{E}[\norm{Z_t}_{L_2}^4]$ in the
the annealed catalytic OU process.

To accomplish this,  given any two random measures
$Z_1$ and $Z_2$, then it is easy to verify that:
\begin{equation*}
\begin{split}
\mathbb{E}(\seq{Z_1,A}),\mathbb{E}(\seq{Z_2,A})&,\quad
A\in\mathcal{B}(\mathbb{R}^d)\\
\mathbb{E}(\seq{Z_1,A}\seq{Z_2,B})&,\quad
A,B\in\mathcal{B}(\mathbb{R}^d)
\end{split}
\end{equation*}
are well defined measures which will be called the first and second moment measures. Similarly,
given the measure-valued process $\{Z_t\}_{t\geq 0}$ one can define the n-th moment measure of n
given random variables denoted by:
$$\mathcal{M}_{t_1\dots t_n}(dx_1,\dots,dx_n)=\mathbb{E}\left(Z_{t_1}(dx_1)Z_{t_2}(dx_2)\cdots Z_{t_n}(dx_n)\right).$$

The main result of this section, is the following:
\begin{prop}  The   super-Brownian motion $Z_t$ in $\mathbb{R}^1$ has  the following first and second moment
measures:
\begin{list}{}
\item{(i)} if $Z_0=dx,\;\mathcal{M}_t(dx)= dx$ the Lebesgue
measure. \item{(ii)} if $Z_0=\delta_0(x)$,then:
\begin{equation*}
\mathcal{M}_t(dx)= p(t,x)\cdot dx
\end{equation*}
\item{(iii)} if $Z_0=dx$:
\begin{equation}
\mathcal{M}_t(dx_1dx_2)=\left(\int_0^tp(2s,x_1,x_2)ds\right)
\cdot\;dx_1\;dx_2
 \end{equation}
\item{(iv)} if  $Z_0=\delta_0(x)$:
\begin{equation}\label{4:meas}
\mathcal{M}_t(dx_1dx_2)=\left(\int_0^t\int_{\mathbb{R}}p(t-s,0,y)
p(2s,x_1,x_2)\;dy\;ds\right) \cdot\;dx_1\;dx_2
 \end{equation}
 where $dx,dx_1dx_2$ denote the Lebesgue measures on $\mathbb{R}\text{
and }\mathbb{R}^2$, respectively.
\item{(v)} if $t_1<t_2$ and $Z_0=dx$:
\begin{equation}\label{5:meas}
\mathcal{M}_{t_1t_2}(dx_1dx_2)=\left(\int_0^{t_1}p(2s+t_2-t_1,x_1,x_2)ds
\right)\cdot\;dx_1\;dx_2
\end{equation}
\item{(vi)} if $t_1<t_2$ and $Z_0=\delta_0(x)$:
\begin{equation}\label{6:meas}
\mathcal{M}_{t_1t_2}(dx_1dx_2)=\left(\int_0^{t_1}\int_{\mathbb{R}}p(t_1-s,0,y)
p(s,y,x_1)p(t_2-t_1+s,y,x_2)\,dyds\right)\cdot\;dx_1\;dx_2
\end{equation}
 where $dx,dx_1dx_2$ denote the Lebesgue measures on $\mathbb{R}\text{
and }\mathbb{R}^2$, respectively.
\end{list}
\end{prop}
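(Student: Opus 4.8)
The engine for all six identities is the log-Laplace characterization of the superprocess. For super-Brownian motion ($\alpha=2,\beta=1$) it reads $\mathbb{E}_\mu[\exp(-\seq{\psi,Z_t})]=\exp(-\seq{U_t\psi,\mu})$, where $u(t)=U_t\psi$ is the mild solution of $u(t)=S_t\psi-\tfrac12\int_0^t S_{t-r}\,u(r)^2\,dr$ (the branching coefficient $\tfrac12$ is that of the generator $L$) and $S_t f(x)=\int_{\Rd}p(t,x,y)f(y)\,dy$ is the heat semigroup. Since the moment measures are exactly the moment coefficients of this functional, the plan is to insert test functions scaled by small parameters, expand $U_t$ in powers of those parameters, and read off the coefficients. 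For the first moment I set $\psi=\theta f$ and write $U_t(\theta f)=\theta\,u^{(1)}(t)+\theta^2u^{(2)}(t)+O(\theta^3)$; matching powers of $\theta$ in the mild equation gives $u^{(1)}(t)=S_t f$ and $u^{(2)}(t)=-\tfrac12\int_0^t S_{t-r}(S_r f)^2\,dr$. Differentiating $\exp(-\seq{U_t(\theta f),\mu})$ once in $\theta$ at $0$ then yields $\mathbb{E}\seq{f,Z_t}=\seq{S_t f,\mu}=\seq{f,S_t\mu}$, which by invariance of $S_t$ is the Lebesgue measure when $Z_0=dx$ and is $p(t,\cdot)\,dx$ when $Z_0=\delta_0$; this is (i) and (ii).

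For the single-time second moment I polarize, taking $\psi=\theta_1 f_1+\theta_2 f_2$ and retaining the bilinear term. The nonlinearity contributes $(S_r f_1)(S_r f_2)$ at order $\theta_1\theta_2$, so the mixed coefficient of $U_t$ is $u^{(2)}_{12}(t)=-\int_0^t S_{t-r}\big[(S_r f_1)(S_r f_2)\big]\,dr$. Applying $\partial_{\theta_1}\partial_{\theta_2}$ to $\exp(-\seq{U_t\psi,\mu})$ at the origin gives
\begin{equation*}
\mathbb{E}\big[\seq{f_1,Z_t}\,\seq{f_2,Z_t}\big]=\seq{S_t f_1,\mu}\,\seq{S_t f_2,\mu}+\int_0^t\seq{S_{t-r}\big[(S_r f_1)(S_r f_2)\big],\mu}\,dr,
\end{equation*}
the first term being the product of first moments and the second the covariance kernel sought. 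Writing this kernel out in terms of $p$ and invoking the Gaussian product identity $p(r,y,x_1)p(r,y,x_2)=p(2r,x_1,x_2)\,p(r/2,y,(x_1+x_2)/2)$ together with Chapman--Kolmogorov $\int_{\Rd}p(a,w,x_1)p(b,w,x_2)\,dw=p(a+b,x_1,x_2)$ collapses the spatial integrals: for $Z_0=dx$ the ancestral integration is trivial and leaves $\int_0^t p(2r,x_1,x_2)\,dr$, i.e. (iii); for $Z_0=\delta_0$ the ancestral heat kernel $p(t-r,0,y)$ survives against the two branch kernels $p(r,y,x_1)$, $p(r,y,x_2)$, producing (iv).

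The two-time measures (v)--(vi) require the joint Laplace functional, which I obtain from the Markov property exactly as in the proof of Theorem~\ref{INHOMEQN}: for $t_1<t_2$, conditioning on $\sigma(Z_s:s\le t_1)$,
\begin{equation*}
\mathbb{E}_\mu\big[\exp(-\seq{\psi_1,Z_{t_1}}-\seq{\psi_2,Z_{t_2}})\big]=\exp\big(-\seq{U_{t_1}(\psi_1+U_{t_2-t_1}\psi_2),\mu}\big).
\end{equation*}
Setting $\psi_1=\theta_1 f_1$, $\psi_2=\theta_2 f_2$ and extracting the $\theta_1\theta_2$ coefficient reduces the problem to the single-time computation at time $t_1$ with $f_2$ replaced by $S_{t_2-t_1}f_2$: the covariance becomes $\int_0^{t_1}\seq{S_{t_1-r}[(S_r f_1)(S_{r+t_2-t_1}f_2)],\mu}\,dr$. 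For $Z_0=dx$ the branch-location integral convolves $p(r,\cdot,x_1)$ with $p(r+t_2-t_1,\cdot,x_2)$ into $p(2r+t_2-t_1,x_1,x_2)$, which is (v); for $Z_0=\delta_0$ the surviving ancestral kernel $p(t_1-r,0,y)$ multiplies $p(r,y,x_1)$ and $p(r+t_2-t_1,y,x_2)$, giving (vi).

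The main obstacle is not the algebra but its justification. I must verify that $\theta\mapsto U_t(\theta f)$ is genuinely (twice) differentiable in the relevant weighted space with the claimed coefficients, that these derivatives may be interchanged with $\mathbb{E}_\mu$, and that every integral above is finite for $\mu\in M_p(\Rd)$ under the standing restriction $p<d+\alpha$ carried over from Theorem~\ref{1:Iscoe}. Concretely this means controlling the mild equation and its Fréchet derivatives uniformly against the weight $(1+|x|^p)^{-1}$, and checking the tail bounds that make $\seq{S_{t-r}[(S_r f_1)(S_r f_2)],\mu}$ integrable in $r$ up to $t$. Once the single-time differentiation is established, the Gaussian product and Chapman--Kolmogorov identities do the rest, and the two-time case adds only the extra propagation $S_{t_2-t_1}$ with no further analytic difficulty.
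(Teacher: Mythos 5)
Your proposal follows essentially the same route as the paper: expand the mild form of the log-Laplace equation perturbatively in the test-function amplitude (the paper's Volterra/Neumann iteration), polarize to extract the mixed second-order coefficient, reduce the two-time case to the one-time case via the Markov property by replacing $f_2$ with $S_{t_2-t_1}f_2$, and collapse the spatial integrals by Chapman--Kolmogorov. The only differences are bookkeeping: you carry the branching constant $\tfrac{1}{2}$ explicitly and retain the product-of-first-moments term that the paper's statement silently drops, which if anything makes your version the more internally consistent of the two.
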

\begin{proof}
Consider SBM $Z_t$ with $Z_0=\mu$. Then for
$\lambda\in\mathbb{R}_+$ and $\phi\in C_+(\mathbb{R}^d)$:
\begin{equation*}
\begin{split}
\mathbb{E}_\mu\left[\exp(-\lambda\seq{\phi,Z_t})\right]&=\exp\left[-\seq{u(\lambda,t),\mu}\right]=:
\exp(-F(\lambda))
\end{split}
\end{equation*}
where,  $u(\lambda,t)$ satisfies the equation:
\begin{equation}\label{e:LAP}
\begin{split}
\dot{u}(t)&=\Delta u(t)-u^2(t)\\
u(\lambda,0)&=\lambda\phi
\end{split}
\end{equation}
Then the first moment  is computed according to
$$\mathbb{E}(\int \phi(x)Z_t(dx))=\left[\frac{\partial e^{F(\lambda)}}{\partial \lambda}
\right]_{\lambda=0}=F'(0)\;$$ since $F(0)=0$.
 $F(\lambda)$ will be developed as a Taylor series below. First
 rewrite
(~\ref{e:LAP}) as a Volterra integral equation of the second kind,
namely:
$$u(t)+ \int_0^t T_{t-s}(u^2(s))\;ds=T_t (\lambda\phi) $$

whose solution is given by its Neumann series:

\begin{equation}\label{i:ITER}
u(t)=T_t \,(\lambda \phi) + \sum_{k=1}^n (-1)^k\mathbb{T}^k
(T_t(\lambda\phi))
\end{equation}

where the operators $T_t$ and  $\mathbb{T}$ are defined by:

\begin{equation}
\begin{split}
T_t(\lambda\phi(x))&=\lambda\int_{\mathbb{R}}p(t,x,y)\phi(y)\,dy\\
\mathbb{T}(T_t(\lambda\phi))&=-\int_0^tT_{t-s}[(T_s(\lambda\phi))^2]\;ds=-\lambda^2\int_0^tT_{t-s}[(T_s\phi)^2]\;ds\\
&=-\lambda^2\int_0^t\int_{\mathbb{R}}p(t-s,x,y)
\left(\int_{\mathbb{R}}p(s,y,z)\phi(z)\,dz\right)^2\,dy\,ds.
\end{split}
\end{equation}

We obtain $F(\lambda)= \sum_{n=1}^\infty
(-1)^{n+1}c_n\,\lambda^n$, in particular:
\begin{equation}\label{1:momt}
c_1=\int_\mathbb{R}T_t\phi\,\mu(dx)=\int_\mathbb{R}\int_\mathbb{R}p(t,x,y)\phi(y)\,dy\,\mu(dx)
\end{equation}
\begin{equation}\label{2:momt}
\begin{split}
c_2&=\int_\mathbb{R} \int_0^t\int_{\mathbb{R}}p(t-s,x,y)
\left(\int_{\mathbb{R}}p(s,y,z)\phi(z)\,dz\right)^2
\,dy\,ds\,\mu(dx)\\
&=\int_0^t\int_{\mathbb{R}^4}p(t-s,x,y)p(s,y,z_1)p(s,y,z_1)\phi(z_1)
\phi(z_2)\,dz_1\,dz_2\,dy\,\mu(dx)\,ds
\end{split}
\end{equation}
\begin{equation}\label{3:momt}
\begin{split}
c_4&=\int_\mathbb{R} \int_0^tp(t-s,x,w)\\
&\;\;\left[\int_0^s\int_{\mathbb{R}}p(s-s_1,w,y)\left(\int_{\mathbb{R}}p(s,y,z)
\phi(z)\,dz\right)^2\,dy\,ds_1\right]^2\,ds\,dw\mu(dx)
\end{split}
\end{equation}
and so on.

Taking now $d=1$, $Z_0=dx$, a given Borel set
$A\in\mathcal{B}(\mathbb{R}) \text{ and }\phi=\mathbb{I}_A$, we
obtain from (~\ref{1:momt}):

\begin{equation*}
\begin{split}
\mathbb{E}\seq{A,Z_t}&=\int_\mathbb{R}\int_\mathbb{R}
p(t,x,y)\phi(y)dy\,dx=\int_\mathbb{R}\int_\mathbb{R}p(t,x,y)\,dx\,\mathbb{I}_A(y)\,dy\\
&=\int_Ady
\end{split}
\end{equation*}
i.e. $\mathbb{E}\{Z_t(dx)\}$ is the Lebesgue measure.\\
\medskip
The covariance measure $\mathbb{E}(Z_t(dx_1)Z_t(dx_2))$ can now be
computed applying the same procedure to
$\phi=\lambda_1\mathbb{I}_{A_1}+\lambda_2\mathbb{I}_{A_2}$ in
which case we conclude that $
-\seq{u(t),\mu}=F(\lambda_1,\lambda_2)$ is again a Taylor series
in $\lambda_1,\text{ and } \lambda _2$, with constant coefficient
equal to zero, and only the coefficient of $\lambda_1\lambda_2$
has to be computed:
\begin{equation}
\begin{split}
\mathbb{T}( T_t\phi)&=-\int_0^tT_{t-s}[(T_s\phi)^2]\;ds\\
&=-\int_0^tT_{t-s}[\lambda_1^2(T_s\mathbb{I}_{A_1})^2+
2\lambda_1\lambda_2T_s\mathbb{I}_{A_1}T_s\mathbb{I}_{A_2}+
\lambda_2^2(T_s\mathbb{I}_{A_2})^2] \,ds\\
&=-\lambda_1^2\int_0^tT_{t-s}[(T_s\mathbb{I}_{A_1})^2]\,ds
-2\lambda_1\lambda_2\int_0^tT_{t-s}[T_s\mathbb{I}_{A_1}\cdot
T_s\mathbb{I}_{A_2}]ds\\
&\;\;\;-\lambda_2^2\int_0^tT_{t-s}[(T_s\mathbb{I}_{A_2})^2]\,ds
\end{split}
\end{equation}

So:
\begin{equation}\label{4:secm}
\begin{split}
\mathbb{E}(Z_t(A_1)Z_t(A_2))&=\frac{1}{2}\left[\frac{\partial^2
F(\lambda_1,\lambda_2)}{\partial\lambda_1\partial\lambda_2}
\right]_{\lambda_1=\lambda_2=0}\\
&=\int_0^t\int_{\mathbb{R}}T_{t-s}[T_s\mathbb{I}_{A_1}\cdot
T_s\mathbb{I}_{A_2}]\,\mu(dx)\,ds.
\end{split}
\end{equation}
In order to obtain the density for the measure
$\mathbb{E}(Z_t(dx_1)Z_t(dx_2))$, we write in detail the last
integral as follows:
$$\int_0^t\int_{\mathbb{R}}\int_{\mathbb{R}}p(t-s,x,y)
\left(\int_{A_1}p(s,y,z_1)dz_1\cdot\int_{A_2}p(s,y,dz_2)dz_2
\right)\mu(dx)\,ds\,dy$$ applying Fubini yields:
$$\int_{A_1}\!\!\int_{A_2}\!\!\int_0^t\int_{\mathbb{R}}\int_{\mathbb{R}}
p(t-s,x,y)p(s,y,z_1)p(s,y,z_2)\;dy\;\mu(dx)\;ds\;dz_1\;dz_2$$ from
which, we conclude that $\mathbb{E}(Z_t(dx_1)Z_t(dx_2))$ has the
following density w.r.t. Lebesgue measure:
\begin{equation}\label{4:denst}
\int_0^t\int_{\mathbb{R}}\int_{\mathbb{R}}
p(t-s,x,y)p(s,y,z_1)p(s,y,z_2)\;dy\;\mu(dx)\;ds
\end{equation}
assuming $\mu(dx)$ is the Lebesgue measure in $\mathbb{R}$ and
$dx_1\cdot d x_2$ is the Lebesgue measure in $\mathbb{R}^2$ one
obtains the second moment measure $\mathcal{M}_t(dx_1dx_2)$
defined as:
\begin{equation}
\mathcal{M}_t(dx_1dx_2)\doteq\left(\int_0^tp(2s,x_1,x_2)ds\right)
\cdot\;dx_1\;dx_2
\end{equation}

For the  case $t_1\neq t_2$, assume $t_1<t_2$ then:
\begin{equation*}
\begin{split}
&\mathbb{E}\left[\int\phi_1Z_{t_1}\cdot\int\phi_2Z_{t_2}\right]=\\
&=\mathbb{E}\left[\mathbb{E}\left[\int\phi_1Z_{t_1}\cdot\int\phi_2Z_{t_2}\right]
|\sigma(Z_r):0\leq r \leq t_1\right]\\
&=\mathbb{E}\left[\int\phi_1Z_{t_1}\cdot\mathbb{E}\left[\int\phi_2Z_{t_2}
|\mathcal{F}_{Z_{t_1}}\right]\right]\\
&=\mathbb{E}\left[\int\phi_1Z_{t_1}\cdot
T_{t_2-t_1}\left[\int\phi_2Z_{t_1}\right]\right]\\
&=\mathbb{E}\left[\int\phi_1Z_{t_1}\cdot\int
T_{t_2-t_1}(\phi_2)Z_{t_1}\right]\\
&=\mathbb{E}[\seq{\phi_1,Z_{t_1}}\seq{T_{t_2-t_1}(\phi_2),Z_{t_1}}]
\end{split}
\end{equation*}
so, replacing in (~\ref{4:secm}) $\mathbb{I}_{A_2}\text{ by }
T_{t_2-t_1}(\mathbb{I}_{A_2})$ and performing the same analysis,
we can define following measure on $\mathbb{R}^2$
\begin{equation*}
\mathcal{M}_{t_1t_2}(dx_1dx_2)=\left(\int_0^{t_1}p(2s+t_2-t_1,x_1,x_2)ds \right)\cdot dx_1dx_2.
\end{equation*}
Finally, note that  if $Z_0=\delta_0(x)$, then (\ref{4:denst}) yields:
\begin{equation*}
\mathcal{M}_{t_1t_2}(dx_1dx_2)=\left(\int_0^{t_1}\int_{\mathbb{R}}
p(t_1-s,0,y)p(s,y,x_1)p(t_2-t_1+s,y,x_2)\,dy\,ds \right)\cdot dx_1dx_2
\end{equation*}
\end{proof}

\begin{rem}
The above procedure can also be used for any $\alpha\in(0,2],\beta=1$, any dimension $d\geq 1 $ and
any $C_0$ semigroup $S_t$ with probability transition function $p(t,x,y)$.
\end{rem}

\subsection{Proof of Theorem \ref{4:delta}}

 We can now use the  results of the last subsection to
determine properties  of the solutions of (\ref{SPCOU}).

(i) It follows from the form of the characteristic function (given by (\ref{CFE}) with $\beta =1$
in (\ref{2:redif})) that the log of the characteristic function of $\langle \lambda\phi,X_t\rangle$
is not a quadratic in $\lambda$ and in fact the fourth cumulant is positive so that the random
field is leptokurtic.

(ii)  Recall that
$$X(t,x)=\int_0^t\int_{\mathbb{R}}p(t-s,x,y)W_{Z_s}(ds,dy)$$

\smallskip
\begin{lem}
\begin{equation}\label{eqnlema}
\begin{split}
\mathbb{E}[\norm{X_t}_{L_2}^4]&=C\mathbb{E}\left[\int_0^t\int_\mathbb{R}\frac{Z_s(dy)}{\sqrt{t-s}}ds\right]^2\\
&+2\int_{\mathcal{D}_2}p^2(2t-s_1-s_2,y_1,y_2)
\,\mathbb{E}\left(Z_{s_1}(dy_1)Z_{s_2}(dy_2)\right)\;ds_1ds_2.
\end{split}
\end{equation}

\end{lem}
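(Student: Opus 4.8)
The plan is to compute $\mathbb{E}[\|X_t\|_{L_2}^4]$ directly from the representation $\|X_t\|_{L_2}^2 = \int_{\mathbb{R}} X^2(t,x)\,dx$. Conditioning on the catalyst $Z$, the field $X(t,\cdot)$ is Gaussian with covariance $\Gamma_t(x,y)$ given in the earlier Proposition, so I would first write $\mathbb{E}[\|X_t\|_{L_2}^4 \mid Z]$ as a double spatial integral of $\mathbb{E}[X^2(t,x)X^2(t,y)\mid Z]$ and exploit the Gaussian (Wick/Isserlis) formula. For a mean-zero Gaussian pair, $\mathbb{E}[X^2(t,x)X^2(t,y)] = \mathbb{E}[X^2(t,x)]\,\mathbb{E}[X^2(t,y)] + 2\bigl(\mathbb{E}[X(t,x)X(t,y)]\bigr)^2$. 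This is the source of the two-term structure in the statement: the product of variances yields the first (squared) term and the cross-covariance squared yields the second.

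The key steps I would carry out in order are as follows. First, apply the Gaussian moment identity conditionally on $Z$ to get
\[
\mathbb{E}[\|X_t\|_{L_2}^4 \mid Z] = \left(\int_{\mathbb{R}} \Gamma_t(x,x)\,dx\right)^2 + 2\int_{\mathbb{R}}\int_{\mathbb{R}} \Gamma_t(x,y)^2 \,dx\,dy,
\]
where $\Gamma_t(x,x) = \int_0^t\int_{\mathbb{R}} p^2(t-s,x,u)\,Z_s(du)\,ds$ is the conditional variance. Second, for the first term I would carry out the $x$-integral of $\int_{\mathbb{R}} p^2(t-s,x,u)\,dx$ using the Chapman–Kolmogorov–type identity $\int_{\mathbb{R}} p(t-s,x,u)^2\,dx = p(2(t-s),u,u) = c/\sqrt{t-s}$ (the $L_2$-norm of a Gaussian kernel), producing the $\int_0^t\int_{\mathbb{R}} Z_s(dy)/\sqrt{t-s}\,ds$ factor. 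Third, for the second term I would expand $\Gamma_t(x,y)^2$ as a product of two $(s,u)$-integrals against $Z$, interchange the order of integration, and carry out the $x,y$ integrals against the four heat kernels, again collapsing pairs via Chapman–Kolmogorov to yield $p^2(2t-s_1-s_2,y_1,y_2)$ integrated against $Z_{s_1}(dy_1)Z_{s_2}(dy_2)$. Finally, I would take expectation over $Z$; this replaces $Z_{s_1}(dy_1)Z_{s_2}(dy_2)$ by the second moment measure $\mathbb{E}(Z_{s_1}(dy_1)Z_{s_2}(dy_2))$, exactly the object computed in the preceding Proposition, giving the stated formula.

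The main obstacle will be the careful bookkeeping of the spatial Gaussian integrals in the second term: one must correctly combine four kernels $p(t-s_1,x,y_1)p(t-s_1,y,y_1)p(t-s_2,x,y_2)p(t-s_2,y,y_2)$ over the two free variables $x,y$ and verify that the reduction indeed produces $p^2(2t-s_1-s_2,y_1,y_2)$ with the correct time argument $2t-s_1-s_2$. This requires applying the semigroup/convolution identity twice and tracking which variables are being integrated out. A secondary point requiring care is the justification of the conditional Fubini interchanges and of taking expectation inside the integral, which is legitimate since all integrands are nonnegative (so Tonelli applies) and the second-moment measure of SBM is locally finite; the integrability conclusion $\mathbb{E}[\|X_t\|_{L_2}^4]<\infty$ itself is then deferred to the subsequent estimate using the explicit moment measure \eqref{4:meas} with $Z_0=\delta_0$.
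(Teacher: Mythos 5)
Your proposal is correct and follows essentially the same route as the paper: the paper expands $\|X_t\|_{L_2}^4$ as a fourfold stochastic integral and pairs the white-noise increments into three cases, which is exactly the conditional Wick/Isserlis decomposition you invoke, with Case 1 giving the squared-variance term and Cases 2--3 giving the factor of $2$ on the squared-covariance term. Your spatial reductions ($\int_{\mathbb{R}} p^2(t-s,x,u)\,dx = c/\sqrt{t-s}$ and the Chapman--Kolmogorov collapse to $p^2(2t-s_1-s_2,y_1,y_2)$) match the paper's computation, and the final expectation over $Z$ is taken the same way.
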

\begin{proof}
Using the shorthand $p=p(t-s,x,y),\,dW^s=W_{Z_s}(ds,dy);\,
 p_i(x)=p(t-s_i,x,y_i),\,p_{ij}=p(t-s_i,x_j,y_i),\,
dW^{s_i}=W_{Z_{s_i}}(ds_i,dy_i),\, \text{for }, i=1,\cdots 4,j=1,2 \text{ as well as } {\cal
D}_1=[0,t]\times\mathbb{R},\, {\cal D}_2=[0,t]^2\times\mathbb{R}^2,\,{\cal D}_4=[0,t]^4\times
\mathbb{R}^4$, and noting that $p_{ij}=p_i(x_j)$, we first compute:
\smallskip
\begin{equation*}
\begin{split}
\mathbb{E}[\norm{X_t}_{L_2}^4|Z_s,0\leq s\leq t]&=\mathbb{E}\left[\int_{\mathbb{R}}\left(\int_0^t\int_\mathbb{R}
p(t-s,x,y)W_{Z_s}(ds,dy)\right)^2dx\right]^2\\
&=\mathbb{E}\left[\int_\mathbb{R}\left(\int_{\mathcal{D}_1}\!p\, dW^s\right)^2dx_1\right]
\left[\int_\mathbb{R}\left(\int_{\mathcal{D}_1}\!p\,
dW^s\right)^2dx_2\right]\\
&=\mathbb{E}\left[\int_\mathbb{R}\left(\int_{\mathcal{D}_1}\!
p_1(x_1)\,dW^{s_1}\cdot\int_{\mathcal{D}_1}\!
p_2(x_1)\,dW^{s_2}\right)dx_1\right]\\
&\quad\quad\cdot\left[\int_\mathbb{R}\left(\int_{\mathcal{D}_1}
\!p_3(x_2)\,dW^{s_3}\cdot\int_{\mathcal{D}_1}\!
p_4(x_2)\,dW^{s_4}\right)dx_2\right]\\
&=\mathbb{E}\left[\int_\mathbb{R}\int_{\mathcal{D}_2}\! p_{11}p_{21}\,dW^{s_1}dW^{s_2}\,dx_1\right]
\left[\int_\mathbb{R}\int_{\mathcal{D}_2}\!
p_{32}p_{42}\,dW^{s_3}dW^{s_4}\,dx_2\right]\\
&=\mathbb{E}\left[\int_{\mathbb{R}^2}\int_{\mathcal{D}_4} p_{11}p_{21}p_{32}p_{42}\,dW^{s_1}
dW^{s_2}dW^{s_3}dW^{s_4}
dx_1dx_2\right]\\
&=\int_{\mathbb{R}^2}\int_{\mathcal{D}_4} p_{11}p_{21}p_{32}p_{42}\,\mathbb{E}(dW^{s_1}
dW^{s_2}dW^{s_3}
dW^{s_4})dx_1dx_2\\
&=\mathbb{I}_1+\mathbb{I}_2+\mathbb{I}_3.
\end{split}
\end{equation*}
\medskip
Each one of the above integrals can be evaluated using the independence of the increments of the
Wiener process, according to the following cases:
\\
\medskip
\medskip

\textbf{Case 1:} $ s_1=s_2\,\wedge\, s_3=s_4$, then:
$p_{11}p_{21}=p_{11}^2=p_1^2(x_1),\,p_{32}p_{42}=p_{32}^2=p_3^2(x_2)$,
 and $\mathbb{E}(dW^{s_1} dW^{s_2}dW^{s_3}dW^{s_4})=Z_{s_1}(dy_1)
 ds_1Z_{s_3}(dy_3)ds_3$, hence:\\
 \begin{equation*}
 \begin{split}
 \mathbb{I}_1&=\int_{\mathbb{R}^2}\int_{\mathcal{D}_2}
 p_1^2(x_1)p_3^2(x_2)\,Z_{s_1}(dy_1)ds_1Z_{s_3}(dy_3)ds_3dx_1dx_2\\
 &=\left[\int_\mathbb{R}\int_{\mathcal{D}_1}p_1^2(x_1)\,Z_{s_1}(dy_1)ds_1
 dx_1\right]
 \left[\int_\mathbb{R}\int_{\mathcal{D}_1}p_3^2(x_2)\,Z_{s_3}(dy_3)ds_3
 dx_2\right]\\
 &=\left[\int_\mathbb{R}\int_{\mathcal{D}_1}p^2\,Z_s(dy)dsdx\right]^2\\
 &=\left[\int_0^t\int_\mathbb{R}\int_{\mathbb{R}}p^2\,dx\,Z_s(dy)ds
 \right]^2\\
 &=C\left[\int_0^t\int_\mathbb{R}\frac{Z_s(dy)}{\sqrt{t-s}}ds\right]^2
 \end{split}
 \end{equation*}
 \textbf{Case 2: }$s_1=s_3\wedge s_2=s_4$, then:
 $p_{11}p_{32}=p_{11}p_{12}=p_1(x_1)p_1(x_2),\,p_{21}p_{42}=p_{21}p_{22}
 =p_2(x_1)p_2(x_2)$,
  and $\mathbb{E}(dW^{s_1} dW^{s_2}dW^{s_3}dW^{s_4})=Z_{s_1}(dy_1)
  ds_1Z_{s_2}(dy_2)ds_2$, hence:
\begin{equation*}
\begin{split}
\mathbb{I}_2&=\int_{\mathbb{R}^2}\int_{\mathcal{D}_2}
p_1(x_1)p_1(x_2)p_2(x_1)p_2(x_2)\,Z_{s_1}(dy_1)ds_1Z_{s_2}(dy_2)ds_2dx_1
dx_2\\
&=\int_{\mathcal{D}_2}\left(\int_\mathbb{R}p_1(x_1)p_2(x_1)\,dx_1\cdot
\int_\mathbb{R}p_1(x_2)p_2(x_2)\,dx_2\right)
\,Z_{s_1}(dy_1)ds_1Z_{s_2}(dy_2)ds_2dx_1\\
&=\int_{\mathcal{D}_2}p^2(2t-s_1-s_2,y_1,y_2) \,Z_{s_1}(dy_1)ds_1Z_{s_2}(dy_2)ds_2
\end{split}
\end{equation*}
\textbf{Case 3: }$s_1=s_4\wedge s_2=s_3$, then:
 $p_{11}p_{42}=p_{11}p_{12}=p_1(x_1)p_1(x_2),\,p_{21}p_{32}=p_{21}p_{22}
 =p_2(x_1)p_2(x_2)$,
  and $\mathbb{E}(dW^{s_1} dW^{s_2}dW^{s_3}dW^{s_4})=Z_{s_1}(dy_1)
  ds_1Z_{s_2}(dy_2)ds_2$, and we get the same result as before,
  namely:
\begin{equation*}
\mathbb{I}_3=\int_{\mathcal{D}_2}p^2(2t-s_1-s_2,y_1,y_2) \,Z_{s_1}(dy_1)ds_1Z_{s_2}(dy_2)ds_2
\end{equation*}

 Putting everything together, yields:
\begin{equation}
\begin{split}
&\mathbb{E}[\norm{X_t}_{L_2}^4|Z_s,0\leq s\leq t]\\
&=C\left[\int_0^t\int_\mathbb{R}\frac{Z_s(dy)}{\sqrt{t-s}}ds\right]^2+2\int_{\mathcal{D}_2}p^2(2t-s_1-s_2,y_1,y_2) \,Z_{s_1}(dy_1)ds_1Z_{s_2}(dy_2)ds_2
\end{split}
\end{equation}
So:
\begin{equation}
\begin{split}
\mathbb{E}[\norm{X_t}_{L_2}^4]&=\mathbb{E}\left(
\mathbb{E}[\norm{X_t}_{L_2}^4|Z_s,0\leq s\leq t]\right)\\
&=C\mathbb{E}\left[\int_0^t\int_\mathbb{R}\frac{Z_s(dy)}{\sqrt{t-s}}ds\right]^2+2\int_{\mathcal{D}_2}p^2(2t-s_1-s_2,y_1,y_2)
\,\mathbb{E}\left(Z_{s_1}(dy_1)Z_{s_2}(dy_2)\right)\;ds_1ds_2.
\end{split}
\end{equation}
\end{proof}
\medskip
\medskip

We now return to the proof of Theorem \ref{4:delta}
\begin{proof}
\noindent The first term on the right of \ref{eqnlema} is evaluated below
\begin{equation}\label{1:expct}
\begin{split}
&\mathbb{E}\left[\int_0^t\int_\mathbb{R}\frac{Z_s(dx)}{\sqrt{t-s}}ds\right]^2=\\
&=\mathbb{E}\left[\int_0^t\int_0^t\left(\frac{1}{\sqrt{(t-s_1)(t-s_2)}}
\int_\mathbb{R}\int_\mathbb{R}Z_{s_1}(dx_1)Z_{s_2}(dx_2)\right)ds_1\,ds_2\right]\\
&=\int_0^t\int_0^{s_2}\left(\frac{1}{[(t-s_1)(t-s_2)]^{1/2}}
\int_\mathbb{R}\int_\mathbb{R}\mathbb{E}[Z_{s_1}(dx_1)Z_{s_2}(dx_2)]\right)
ds_1\,ds_2\\
&+\int_0^t\int_{s_2}^t\left(\frac{1}{[(t-s_1)(t-s_2)]^{1/2}}
\int_\mathbb{R}\int_\mathbb{R}\mathbb{E}[Z_{s_1}(dx_1)Z_{s_2}(dx_2)]\right)
ds_1\,ds_2\\
\end{split}
\end{equation}

Assume now $Z_0=\delta_0(x)$, and $s_1\leq s_2$, using (~\ref{6:meas}), one obtains:
\begin{equation*}
\int_\mathbb{R}\int_{\mathbb{R}}\mathbb{E}[Z_{s_1}(dx_1)Z_{s_2}(dx_2)]
=\int_0^{s_1}\int_\mathbb{R}p(s_1-s,0,y)\,dy\,ds\\
=\int_0^{s_1}ds=s_1
\end{equation*}
and similarly, for $s_2\leq s_1$ :
$$\int_\mathbb{R}\int_{\mathbb{R}}\mathbb{E}[Z_{s_1}(dx_1)Z_{s_2}(dx_2)]=s_2$$
so that (~\ref{1:expct}) can be written as:
\begin{equation*}
\mathbb{E}\left[\int_0^t\int_\mathbb{R}\frac{Z_s(dx)}{\sqrt{t-s}}ds\right]^2
=\int_0^t\int_0^{s_2}\frac{s_1ds_1ds_2}{[(t-s_1)(t-s_2)]^{1/2}}
+\int_0^t\int_{s_2}^t\frac{s_2ds_1ds_2}{[(t-s_1)(t-s_2)]^{1/2}}
\end{equation*}
the first of the above integrals one can be directly done:
\begin{equation*}
\begin{split}
\frac{1}{(t-s_2)^{1/2}}\int_0^{s_2}\frac{s_1\;ds_1}{(t-s_1)^{1/2}}&=\frac{1}{(t-s_2)^{1/2}}\left(\frac{4}{3}t^{3/2}+\frac{2}{3}
(t-s_2)^{3/2}-2t(t-s_2)^{1/2}\right)\\
&\leq\frac{1}{(t-s_2)^{1/2}}\left(\frac{4}{3}t^{3/2}+\frac{2}{3}(t-s_2)^{3/2}
\right)\\
&=\frac{4}{3}\frac{t^{3/2}}{(t-s_2)^{1/2}}+\frac{2}{3}.
\end{split}
\end{equation*}
Hence:
$$\int_0^t\int_0^{s_2}\frac{s_1ds_1ds_2}{[(t-s_1)(t-s_2)]^{1/2}}\leq
\frac{4}{3}t^2+\frac{2}{3}t^2=2t^2$$ similarly, for the second integral:
$$\frac{s_2}{(t-s_2)^{1/2}}\int_{s_2}^t\frac{ds_1}{(t-s_1)^{1/2}}=s_2$$
and:
$$\int_0^t\int_{s_2}^t\frac{s_2ds_1ds_2}{[(t-s_1)(t-s_2)]^{1/2}}\leq\frac{t^2}
{2}.$$ Both inequalities together imply:
\begin{equation}\label{1:ineq}
\mathbb{E}\left[\int_0^t\int_\mathbb{R}\frac{Z_s(dx)}{\sqrt{t-s}}ds\right]^2
\leq C_1t^2
\end{equation}
\medskip
Similarly, we estimate below the second integral of
(~\ref{4:nob}):
\begin{equation*}
\begin{split}
&\int_{\mathcal{D}_2}p^2(2t-s_1-s_2,x_1,x_2)
\,\mathbb{E}\left(Z_{s_1}(dx_1)Z_{s_2}(dx_2)\right)\;ds_1ds_2=\\
&=\int_0^t\int_0^{s_2}\left(\int_\mathbb{R}\int_\mathbb{R}
p^2(2t-s_1-s_2,x_1,x_2)\mathbb{E}\left(Z_{s_1}(dx_1)Z_{s_2}(dx_2)\right)
\right)ds_1ds_2\\
&+\int_0^t\int_{s_2}^t\left(\int_\mathbb{R}\int_\mathbb{R}
p^2(2t-s_1-s_2,x_1,x_2)\mathbb{E}\left(Z_{s_1}(dx_1)Z_{s_2}(dx_2)\right)
\right)ds_1ds_2
\end{split}
\end{equation*}
when $0\leq s_1\leq s_2$. Using (~\ref{6:meas}) with $0\leq s\leq
s_1\leq s_2\leq t$, we obtain the following estimate for:
\begin{equation*}
\begin{split}
&\widetilde{\mathbb{I}}_1(t,s_1,s_2)\doteq\int_\mathbb{R}\int_\mathbb{R}
p^2(2t-s_1-s_2,x_1,x_2)\mathbb{E}\left(Z_{s_1}(dx_1)Z_{s_2}(dx_2)\right)\\
&=\frac{\sqrt{2}}{\sqrt{2t-s_1-s_2}}\int_0^{s_2}
\!\!\!\frac{1}{\sqrt{(2t-s_1-s_2)+4s}}ds\\
&\leq\frac{\sqrt{2}}{\sqrt{2t-s_1-s_2}}\int_0^{s_2}
\!\!\!\frac{1}{\sqrt{4s}}ds\\
&\leq C_1\sqrt{\frac{s_2}{2t-s_1-s_2}}
\end{split}
\end{equation*}
\medskip
\medskip
so, we obtain:
\begin{equation}\label{2:ineq}
\begin{split}
\mathbb{I}_1=\int_0^t\int_0^{s_2}\widetilde{\mathbb{I}}_1(t,s_1,s_2)\;ds_1ds_2&\leq
C_1\int_0^t\int_0^{s_2}
\sqrt{\frac{s_2}{2t-s_1-s_2}}ds_1\;ds_2\\
&=C_1\int_0^t\sqrt{s_2}\left[2\sqrt{2t-s_2}-2\sqrt{2t-2s_2}\right]ds_2\\
&\leq C_2\int_0^t\sqrt{s_2}\sqrt{t}ds_2\leq C_3 t^2.
\end{split}
\end{equation}
When $0\leq s_2\leq s_1$, using (~\ref{6:meas}) with $0\leq s\leq s_2\leq s_1\leq t$, and following
the same steps above for the integral:
\begin{equation*}
\begin{split}
\widetilde{\mathbb{I}}_2&\doteq\int_\mathbb{R}\int_\mathbb{R}
p^2(2t-s_1-s_2,x_1,x_2)\mathbb{E}\left(Z_{s_2}(dx_1)Z_{s_1}(dx_2)\right)\\
&\leq C_1\sqrt{\frac{t}{2t-s_1-s_2}}
\end{split}
\end{equation*}
therefore:
\begin{equation}\label{3:ineq}
\begin{split}
\mathbb{I}_2=\int_0^t\int_0^{s_2}\widetilde{\mathbb{I}}_2(t,s_1,s_2)\;ds_1ds_2&\leq
C_4\int_0^t\int_{s_2}^t
\sqrt{\frac{t}{2t-s_1-s_2}}ds_1\;ds_2\\
&=C_5\int_0^t\sqrt{t}\left[2\sqrt{2t-2s_2}-2\sqrt{t-s_2}\right]ds_2\\
&\leq C_6\int_0^t\sqrt{t}\sqrt{t}ds_2=C_6 t^2.
\end{split}
\end{equation}
Gathering (~\ref{1:ineq}), (~\ref{2:ineq}) and (~\ref{3:ineq})
yields:
$$\mathbb{E}[\norm{X_t}_{L_2}^4]\leq Ct^2.$$
In the same way we can obtain the estimates
$$\mathbb{E}[\norm{X_t-X_s}_{L_2}^4]\leq C(t-s)^2.$$
for the increments which shows the continuity of the paths using
Kolmogorov's criteria.
\end{proof}

\section{Comments and Open Problems}
\begin{enumerate}
\item Our results corrrespond to the analogue of the  Heston model (HM) with $\rho =0$. The general case when $\rho\ne 0$ would require
additional techniques and is left as an open problem.

\item The study of the properties of the annealed case for arbitrary $\alpha,\beta\ne 1$ and for the more general continuous state branching involves a catalyst with infinite second
moments and is left as an open problem.

\item It would be interesting to determine the state space for annealed process in $\mathbb{R}^d$ with $d>1$.

\end{enumerate}

\end{document}